\documentclass[12pt]{amsart}


\usepackage{fourier}
\usepackage{calrsfs}

\usepackage{amssymb}
\usepackage{amsfonts}

\usepackage{color}
\usepackage{epsfig}
\usepackage{graphicx}
\usepackage{graphics}
\usepackage{amssymb}
 \usepackage{fullpage}
\usepackage{comment}

\newfont{\vlte}{eufm10 at 22pt}
\newfont{\lte}{eufm10 at 18pt}
\newfont{\smb}{msbm6}
\newfont{\mmb}{msbm8}
\newfont{\tmb}{msbm10}
\newfont{\lmb}{msbm10 at 18pt}


\newcommand{\ord}{\mbox{ord}}



\newcommand{\be}{\begin{enumerate}}
\newcommand{\ee}{\end{enumerate}}

\newcommand{\denom}{\mbox{denom}}


\chardef\secsym=129
\newcommand{\calA}{{\mathcal A}}

\newcommand{\calP}{{\mathcal P}}

\newcommand{\calS}{{\mathcal S}}
\newcommand{\calT}{{\mathcal T}}

\newcommand{\calV}{{\mathcal V}}
\newcommand{\calW}{{\mathcal W}}


\newcommand{\Q}{{\mathbb Q}}
\newcommand{\R}{{\mathbb R}}
\newcommand{\Z}{{\mathbb Z}}

\newcommand{\pp}{{\mathfrak p}}

\chardef\sha=88
\newtheorem{theorem}{Theorem}[section]
\newtheorem{lemma}[theorem]{Lemma}
\newtheorem{corollary}[theorem]{Corollary}
\newtheorem{proposition}[theorem]{Proposition}

\theoremstyle{definition}
\newtheorem{definition}[theorem]{Definition}
\newtheorem{question}[theorem]{Question}
\newtheorem{conjecture}[theorem]{Conjecture}
\newtheorem{example}[theorem]{Example}

\theoremstyle{remark}
\newtheorem{remark}[theorem]{Remark}













  \theoremstyle{plain}

\begin{document}
\bibliographystyle{plain}%
 \title{Defining Integers}%
\author{Alexandra Shlapentokh}%
\thanks{The author has been partially supported by NSF grant DMS-0650927 and by a grant from John Templeton Foundation.}
\address{Department of Mathematics \\ East Carolina University \\ Greenville, NC 27858, USA}%
\email{shlapentokha@ecu.edu }
\urladdr{www.personal.ecu.edu/shlapentokha} \subjclass[2000]{Primary 11U03; Secondary }
\maketitle

\noindent You can't always get what you want\\
You can't always get what you want\\
You can't always get what you want\\
But if you try sometimes you might find\\
You get what you need \\
\vskip 0.1in
Rolling Stones

\section{Prologue}
\subsection{A Question and the Answer}
The history of the problems discussed in this exposition goes back to a question that was posed by Hilbert in 1900: is there an algorithm which, given arbitrary polynomial equation in several variables over $\Z$, determines whether such an equation has solutions in $\Z$? (At the time Hilbert posed the question, a rigorous notion of the algorithm did not yet exist.  So the version above is a modern interpretation of the question.)  This
question, being the tenth question on a list, became known as Hilbert's Tenth Problem (referred to as ``HTP'' in the future), and was answered negatively in the work of
M. Davis, H. Putnam, J. Robinson and Yu. Matijasevich. (See \cite{Da1}, \cite{Da2} and \cite{Mate}.)  In fact, as we explain below, significantly more was shown: it was proved that recursively enumerable subsets of integers and Diophantine subsets of integers were the same.  We define these sets below.
\begin{definition}[Recursive and Recursively Enumerable Subsets of $\Z$]%
 A set $A \subseteq \Z^m$ is called \emph{recursive, computable or decidable} if there is an
algorithm (or a computer program) to determine the membership in the set.%

A set $A \subseteq \Z^m$ is called \emph{recursively or computably  enumerable } if there is an algorithm (or a computer program)
to list the set.%
\end{definition}%
The following theorem  is a well-known result from Recursion Theory (see for example \cite{Rogers}[\S 1.9]).
\begin{theorem}%
There exist recursively enumerable sets which are not recursive.%
\end{theorem}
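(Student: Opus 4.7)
My plan is to produce an explicit recursively enumerable set that fails to be recursive, via the classical diagonal halting-problem argument. The first step is to fix a computable Gödel numbering $P_0, P_1, P_2, \ldots$ of all programs (equivalently, Turing machines), so that the phrase ``$P_n$ halts on input $m$'' is well-defined and one has at one's disposal a universal simulator that, given $(n,m,s)$, can effectively determine whether $P_n$ halts on input $m$ within $s$ steps. Granting such a numbering, I would introduce the diagonal halting set
$$K = \{ n \in \N : P_n(n) \text{ halts}\}.$$

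The next step is to verify that $K$ is recursively enumerable. A standard dovetailing does the job: loop over pairs $(n,s)$ in a recursive bijection with $\N$, simulate $P_n(n)$ for $s$ steps using the universal simulator, and output $n$ whenever a halting computation is observed. Every $n \in K$ is eventually output (for $s$ equal to the actual running time of $P_n(n)$), and no $n \notin K$ is ever output, so this is a listing algorithm for $K$. Composing with the inclusion $\N \hookrightarrow \Z$ then produces an r.e.\ subset of $\Z$ in the sense of the definition above.

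The heart of the argument is to show $K$ is not recursive, and I would argue by contradiction. If $K$ were recursive, there would exist a total computable function $\chi_K$ deciding membership in $K$. Then the prescription ``on input $n$, compute $\chi_K(n)$; halt if it is $0$, loop forever if it is $1$'' describes a program, hence coincides with $P_d$ for some specific index $d$. Evaluating at $n = d$ yields $d \in K$ iff $P_d(d)$ halts iff $\chi_K(d) = 0$ iff $d \notin K$, a contradiction.

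The only genuine obstacle here is foundational rather than combinatorial: one needs in hand a universal model of computation together with a computable Gödel numbering and a universal simulator, so that both the r.e.\ verification and the self-reference used in the diagonalization step make rigorous sense. Since the theorem is cited from Rogers, I would invoke that standard recursion-theoretic machinery wholesale rather than construct it here; once it is granted, the two short arguments above finish the proof.
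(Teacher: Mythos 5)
Your argument is correct. Note that the paper does not actually prove this theorem — it simply cites it as a well-known fact from recursion theory, referring the reader to Rogers \S 1.9. The diagonal halting-set argument you give (define $K = \{n : P_n(n) \text{ halts}\}$, verify it is r.e.\ by dovetailing, and derive a contradiction from a hypothetical decider by self-application at its own index) is precisely the canonical proof one finds in that reference, so your proposal supplies exactly the proof the paper chose to omit. The only thing worth emphasizing, which you already flag, is that the whole construction rests on having fixed a computable enumeration of programs with a universal simulator; once that machinery is in place, both halves of the argument go through without further difficulty, and the final equivalence $d \in K \Leftrightarrow P_d(d)\text{ halts} \Leftrightarrow \chi_K(d)=0 \Leftrightarrow d \notin K$ is the intended contradiction.
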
%
We now define Diophantine sets in a somewhat more general setting.
\begin{definition}[Diophantine Sets: a Number-Theoretic Definition]
Let $R$ be a  commutative ring with identity.  (All the rings considered below satisfy these assumptions.)   A subset $A \subset R^m$ is called Diophantine over $R$ if there exists a polynomial $p(T_1,\ldots T_m,X_1,\ldots,X_k)$ with coefficients in $R$ such that for any element
$(t_1,\ldots,t_m) \in R^m$ we have that
\[%
\exists x_1,\ldots, x_k \in \Z: p(t_1,\ldots,t_m,x_1,\ldots,x_k) = 0
\]%
\[%
\bigm\Updownarrow
\]%
\[%
 (t_1,\ldots,t_m) \in A.
\]%
In this case we call $p(T_1,\ldots,T_m,X_1,\ldots,X_k)$ a \emph{Diophantine definition} of $A$ over $R$.
\end{definition}
\begin{remark}
 Diophantine sets can also be described as the sets \emph{existentially definable in $R$} in the language of rings or as \emph{projections of algebraic sets}.
 \end{remark}
Given the MDRP result we immediately obtain  the following important corollary.
\begin{corollary}
There are undecidable Diophantine subsets of $\Z$.
\end{corollary}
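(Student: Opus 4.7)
The plan is to combine the two results already in hand: the MDRP theorem, quoted above as the assertion that the recursively enumerable subsets of $\Z$ coincide with the Diophantine subsets of $\Z$, together with the recursion-theoretic theorem cited from Rogers stating that there exist recursively enumerable subsets of $\Z$ which are not recursive. Since ``decidable'' is by definition synonymous with ``recursive'' for subsets of $\Z$, ``undecidable'' just means ``not recursive,'' and the corollary should follow at once.

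Concretely, I would first pick any recursively enumerable set $A \subseteq \Z$ that is not recursive, whose existence is guaranteed by the cited theorem. I would then invoke MDRP to conclude that $A$ is Diophantine over $\Z$, i.e.\ there exists a polynomial $p(T, X_1, \ldots, X_k) \in \Z[T, X_1, \ldots, X_k]$ such that for every $t \in \Z$,
\[
t \in A \iff \exists\, x_1,\ldots,x_k \in \Z \ \text{with} \ p(t,x_1,\ldots,x_k) = 0.
\]
I would emphasize that this Diophantine definition immediately yields an enumeration algorithm for $A$ (simply search through all tuples of integer witnesses), in agreement with $A$ being r.e., but it does \emph{not} supply a decision procedure. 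Since $A$ was chosen non-recursive, no such decision procedure can exist, so $A$ is an undecidable Diophantine subset of $\Z$.

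There is essentially no technical obstacle at this level, because all of the difficulty has been absorbed into the MDRP theorem that is being quoted rather than reproved; the only subtle point to mention explicitly is the identification of ``undecidable'' with ``not recursive,'' and the observation that existential search over the $x_i$ produces enumeration but not decidability. If desired, one could strengthen the statement by noting that one can even take $A$ to be recursively isomorphic to a complete r.e.\ set (e.g.\ the halting set), in which case the resulting polynomial $p$ encodes the halting problem, but for the corollary as stated the argument above suffices.
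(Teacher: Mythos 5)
Your proof is correct and matches exactly the argument the paper has in mind: the paper presents the corollary as an immediate consequence of the MDRP theorem (r.e.\ $=$ Diophantine over $\Z$) together with the cited recursion-theoretic fact that some r.e.\ sets are not recursive, which is precisely the combination you spell out. The extra remarks about enumeration via witness search and about taking $A$ recursively isomorphic to the halting set are sound and consistent with the surrounding discussion, but not needed for the corollary itself.
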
%
It is easy to see that the existence of undecidable Diophantine sets implies that no algorithm as requested by Hilbert exists.
 Indeed, suppose $A\subset \Z$ is an undecidable Diophantine set with a Diophantine definition $P(T,X_1,\ldots,X_k)$.  Assume also that  we have an algorithm to determine the existence of integer solutions for polynomials. Now, let $a \in \Z$  and observe that $a \in A$ if and only if $P(a,X_1,\ldots,X_K)=0$ has solutions in $\Z^k$. So if we can answer Hilbert's question effectively, we can determine the membership in $A$ effectively.

It is also not hard to see that Diophantine sets are recursively enumerable. Given a polynomial $p(T,\bar{X})$ we can
effectively list all $t \in \Z$ such that $p(t,\bar{X})=0$ has a solution $\bar{x} \in \Z^k$ in the
following fashion. Using a recursive listing of $\Z^{k+1}$, we can plug each $(k+1)$-tuple into
$p(T,\bar{X})$ to  see if the value is $0$. Each time we get a zero we add the first element of the $(k+1)$-tuple to the $t$-list.
\subsection{Some Easy Facts or Getting Better Acquainted}
A Diophantine set does not have to be complicated: one of the simplest Diophantine sets is the set of even integers  $$\{t \in \Z | \exists w \in \Z: t =2w\}.$$
To construct more complicated examples we need to establish some properties of Diophantine sets.
\begin{lemma}%
Intersections and unions of Diophantine sets over $\Z$ are Diophantine.
\end{lemma}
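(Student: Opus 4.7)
\medskip

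\noindent\textbf{Proof proposal.} The plan is to reduce both operations to a single polynomial equation by exploiting two elementary algebraic facts about $\Z$: a sum of squares of integers vanishes iff each summand vanishes, and a product of integers vanishes iff at least one factor vanishes.

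First I would set up the notation carefully so the variable sets do not collide. Let $A,B\subseteq \Z^m$ be Diophantine, defined by
\[
 p(T_1,\ldots,T_m,X_1,\ldots,X_k) = 0 \quad \text{and} \quad q(T_1,\ldots,T_m,Y_1,\ldots,Y_\ell)=0
\]
respectively, where the tuples $\bar X=(X_1,\ldots,X_k)$ and $\bar Y=(Y_1,\ldots,Y_\ell)$ are chosen disjoint (rename if necessary). The parameter variables $T_1,\ldots,T_m$ are shared, which is exactly what we want since both conditions are being asserted about the \emph{same} point of $\Z^m$.

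For the intersection, I would propose the polynomial
\[
 r_\cap(\bar T,\bar X,\bar Y) := p(\bar T,\bar X)^2 + q(\bar T,\bar Y)^2.
\]
Given $\bar t\in\Z^m$, the equation $r_\cap(\bar t,\bar X,\bar Y)=0$ has an integer solution iff there exist $\bar x\in\Z^k$ and $\bar y\in\Z^\ell$ with $p(\bar t,\bar x)=0$ and $q(\bar t,\bar y)=0$, i.e.\ iff $\bar t\in A\cap B$. The disjointness of the $\bar X$ and $\bar Y$ variables is what allows the two existential witnesses to be chosen independently. For the union I would use
\[
 r_\cup(\bar T,\bar X,\bar Y) := p(\bar T,\bar X)\cdot q(\bar T,\bar Y),
\]
and verify similarly that $r_\cup(\bar t,\bar X,\bar Y)=0$ is solvable in $\Z^{k+\ell}$ iff at least one of $p(\bar t,\bar X)=0$, $q(\bar t,\bar Y)=0$ is solvable, i.e.\ iff $\bar t\in A\cup B$. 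Here it is essential that when we want to witness $\bar t\in A$ we can still supply \emph{some} value of $\bar Y$ to make the other factor defined (any integers will do, since we only need the product to vanish); the disjoint variables again make this painless.

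There is no real obstacle: the only thing to watch is the bookkeeping of variables, and the fact that the argument for the sum-of-squares trick relies on $\Z$ being a formally real (indeed, ordered) integral domain, so the trick does not transfer verbatim to every ring one might later consider. Once $r_\cap$ and $r_\cup$ are written down, verification of the biconditional in the definition of a Diophantine set is immediate, and the lemma follows.
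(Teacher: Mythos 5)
Your proposal is correct and is essentially identical to the paper's proof: both use the product $p\cdot q$ for union and the sum of squares $p^2+q^2$ for intersection, with disjoint existential variable blocks for the two definitions. Your remarks about variable bookkeeping and the reliance on $\Z$ being formally real are accurate but not new content beyond what the paper already implies.
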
%
\begin{proof}%
Suppose $P_1(T,\bar X), P_2(T,\bar Y)$ are Diophantine definitions of subsets $A_1$ and $A_2 $ of $\Z$
respectively over $\Z$. In this case
\[%
P_1(T,\bar X) P_2(T,\bar Y)
\]%
is a Diophantine definition of $A_1 \cup A_2$, \\
and
\[%
P_1^2(T,\bar X)+ P_2^2(T,\bar Y)
\]%
is a Diophantine definition of $A_1 \cap A_2$.
\end{proof}%
The fact that over $\Z$ an intersection of Diophantine sets  is Diophantine is related to another important aspect of Diophantine equations over $\Z$: a finite system of equations is always equivalent to a single equation in the sense that both the system and the equation have the same solutions over $\Z$ and in the sense that given a finite system of equations, the equivalent equation can be constructed effectively. We prove this assertion in the lemma below.
\begin{lemma}[Replacing Finitely Many by One over $\Z$]%
Any finite system of equations over $\Z$ can be \emph{effectively} replaced by a single polynomial equation
over $\Z$ with identical $\Z$-solution set.%
\end{lemma}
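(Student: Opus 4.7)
The plan is to reduce the general case to pairs of equations and then iterate, exploiting the fact that $\Z$ is formally real: a finite sum of squares of integers equals zero if and only if every summand is zero.

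First I would handle two equations. Given $P_1(\bar T) = 0$ and $P_2(\bar T) = 0$ over $\Z$, I claim the single equation
\[
P_1(\bar T)^2 + P_2(\bar T)^2 = 0
\]
has exactly the same integer solution set. The forward direction is obvious. The reverse uses the key property of $\Z$: since squares of integers are nonnegative, a vanishing sum of two such squares forces each square, hence each $P_i$, to vanish. Note that this differs from the intersection construction for Diophantine sets (where extra existential variables are available); here I must equate polynomials, not just definable sets, so squaring is essential. I would also remark that the construction is \emph{effective}: given $P_1,P_2$ as explicit polynomials, $P_1^2+P_2^2$ can be written down algorithmically.

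Next I would extend to an arbitrary finite system $P_1 = 0, \ldots, P_n = 0$ by a straightforward induction on $n$, or simply by observing that the same argument gives
\[
P_1(\bar T)^2 + P_2(\bar T)^2 + \cdots + P_n(\bar T)^2 = 0
\]
as an equivalent single equation: a sum of nonnegative integers vanishes iff each summand does. Effectiveness is preserved since the polynomial on the left is explicitly computable from the $P_i$.

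There is no real obstacle here; the only subtlety worth flagging is that the argument depends essentially on $\Z$ being formally real (equivalently, on $-1$ not being a sum of squares), so the same trick would fail over a general commutative ring. In particular, the same reduction does not work directly over rings like $\Z[i]$, where one must instead use Diophantine tricks with auxiliary variables to absorb multiple equations into one. For $\Z$, however, the squaring-and-adding construction suffices and terminates in a single explicit polynomial identity.
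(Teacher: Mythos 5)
Your proof is correct and is essentially identical to the paper's: both replace the system $g_1=\cdots=g_m=0$ with the single equation $g_1^2+\cdots+g_m^2=0$, relying on the fact that a sum of squares of integers vanishes only when each term does. The extra remarks you add (effectiveness, failure over non-formally-real rings like $\Z[i]$) are accurate but do not change the substance of the argument.
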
%
\begin{proof}%
Consider a system of equations
\[%
\left \{%
\begin{array}{c}
g_1(x_1,\ldots,x_k)=0\\
g_2(x_1,\ldots,x_k)=0\\
\ldots\\
g_m(x_1,\ldots,x_k)=0
\end{array}
\right .
\]%

This system has solutions in $\Z$ if and only if the following equation has solutions in $\Z$:
\[%
g_1(x_1,\ldots,x_k)^2  + g_2(x_1,\ldots,x_k)^2 + \ldots + g_m(x_1,\ldots,x_k)^2= 0
\]%
\end{proof}%
In fact we can replace a finite system of polynomial equations by an equivalent single polynomial equation over any integral domain $R$ whose fraction field is not algebraically closed.
\begin{lemma}[Replacing Finitely Many by One over an Arbitrary Integral Domain ]%
Let $R$ be any ring such that its fraction field $K$ is not algebraically closed.  In this case, any finite system of equations over $R$ can be replaced by a single polynomial equation over $R$ with  identical $R$-solution set.
\end{lemma}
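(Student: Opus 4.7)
The plan is to reduce to the binary case and then iterate. Specifically, I want to construct a binary form $H(X,Y) \in R[X,Y]$ that is anisotropic over $R$, in the sense that $H(\alpha,\beta) = 0$ with $\alpha,\beta \in R$ forces $\alpha = \beta = 0$. Granted such an $H$, the system $g_1 = \cdots = g_m = 0$ is equivalent, over $R$, to the single equation
\[
H\bigl(g_1,\, H(g_2,\, H(g_3,\,\ldots\, H(g_{m-1}, g_m)\ldots))\bigr) = 0,
\]
proved by induction on $m$: if the outer $H$ vanishes, anisotropy forces both arguments to be zero, and the inductive hypothesis handles the inner expression.

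To construct $H$, I will first use the hypothesis that $K = \Frac(R)$ is not algebraically closed to pick a polynomial $f(T) \in K[T]$ of degree $n \geq 2$ with no root in $K$ (for instance, one may take $f$ monic and irreducible over $K$). Clearing denominators yields $\tilde f(T) = a_n T^n + a_{n-1} T^{n-1} + \cdots + a_0 \in R[T]$ with $a_n \neq 0$ and still no root in $K$, hence none in $R$. I then set
\[
H(X,Y) = a_n X^n + a_{n-1} X^{n-1} Y + \cdots + a_0 Y^n \in R[X,Y],
\]
the homogenization of $\tilde f$.

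The verification that $H$ is anisotropic over $R$ is the crux. Suppose $H(\alpha,\beta) = 0$ with $\alpha,\beta \in R$. If $\beta \neq 0$, then dividing by $\beta^n$ in $K$ gives $\tilde f(\alpha/\beta) = 0$, contradicting the choice of $\tilde f$. Hence $\beta = 0$, and the equation collapses to $a_n \alpha^n = 0$; since $R$ is an integral domain and $a_n \neq 0$, this forces $\alpha = 0$.

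The main potential obstacle is ensuring that the form $H$ detects zeros of elements of $R$ (not merely of $K$), and this is precisely where both hypotheses enter: the non-algebraic-closedness of $K$ produces the polynomial without rational roots, and the integral domain property of $R$ rules out the degenerate case $\beta = 0$, $\alpha \neq 0$. Once the binary anisotropic form is in hand, the iterated substitution above gives the desired single equation, and the whole construction is effective provided $\tilde f$ is given explicitly.
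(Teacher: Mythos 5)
Your proof is correct and follows essentially the same route as the paper's: the paper's construction $\sum_{i=0}^k a_i f^i g^{k-i}$ from a polynomial $h(x)=\sum a_i x^i$ over $R$ with no roots in $K$ is precisely your homogenized form $H(f,g)$, and the paper likewise reduces to two equations (``it is enough to consider the case of two equations'') and relies implicitly on iteration. Your write-up merely spells out more explicitly why $H$ is anisotropic over $R$, treating both the $\beta \neq 0$ and $\beta = 0$ cases, which the paper leaves to the reader.
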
%
\begin{proof}%
It is enough to consider the case of two equations: $f(x_1,\ldots,x_n)=0$ and $g(x_1,\ldots,x_n) = 0$.  If $h(x) = \sum_{i=0}^ka_i x^i$ is a polynomial over $R$ without any roots in $K$, then the polynomial
\[
 \sum_{i=0}^ka_i f^i(x_1,\ldots,x_n) g^{k-i}(x_1,\ldots,x_n)=0
\]
has solutions in $K$ if and only if $f(x_1,\ldots,x_n)=0$ and $g(x_1,\ldots,x_n) = 0$ have a common solution in $K$.
\end{proof}%
\begin{remark}
If the ring in question is recursive, i.e. there exists an injective map from the ring into $\Z$ such that the image of the ring is recursive and the ring operations are translated by recursive functions (functions whose graphs are recursive sets), then there exists a recursive function which can take the coefficients of the system equations as its inputs and output the coefficients of the corresponding single equation with the same solution set over the ring.  For all the rings we consider in this exposition the construction of such a function, which would depend on a construction of an irreducible polynomial over the fraction field, is fairly straightforward.  However, in general, the situation can be a lot more complicated.
\end{remark}
We can use this property of finite systems to give more latitude to our  Diophantine definitions
\begin{corollary}%
We can let the Diophantine definitions over $\Z$ consist of several polynomials without changing the nature of
the relation.
\end{corollary}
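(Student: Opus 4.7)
The plan is to reduce the corollary directly to the preceding lemma on replacing finitely many equations by one over $\Z$. One direction is trivial, so the real content is in showing that a ``multi-polynomial'' definition can always be collapsed to a single-polynomial one.

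First, I would make the statement precise. Call $A\subseteq \Z^m$ \emph{generalized Diophantine} if there exist polynomials $p_1,\ldots,p_r \in \Z[T_1,\ldots,T_m,X_1,\ldots,X_k]$ such that $(t_1,\ldots,t_m)\in A$ if and only if there exist $x_1,\ldots,x_k\in\Z$ with $p_j(t_1,\ldots,t_m,x_1,\ldots,x_k)=0$ for every $j=1,\ldots,r$. The goal is to show that the generalized Diophantine sets coincide with the Diophantine sets as already defined.

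One inclusion is immediate: any Diophantine set is generalized Diophantine by taking $r=1$. For the other inclusion, suppose $A$ is generalized Diophantine via $p_1,\ldots,p_r$. Fix a tuple $(t_1,\ldots,t_m)$ and apply the preceding lemma (Replacing Finitely Many by One over $\Z$) to the system $p_j(t_1,\ldots,t_m,X_1,\ldots,X_k)=0$ for $j=1,\ldots,r$. The lemma tells us that this system has a solution in $\Z^k$ if and only if the single equation
\[
P(T_1,\ldots,T_m,X_1,\ldots,X_k) := \sum_{j=1}^r p_j(T_1,\ldots,T_m,X_1,\ldots,X_k)^2 = 0
\]
has a solution in $\Z^k$ when the $T_i$ are specialized to $t_i$. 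Since $P$ has coefficients in $\Z$ and does not depend on the specific tuple, it serves as a Diophantine definition of $A$ in the original single-polynomial sense.

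There is essentially no obstacle here: the only thing to verify is that the sum of squares construction commutes with specialization of the parameters $T_i$, which is obvious since specialization is a ring homomorphism. The argument therefore goes through with no further work, and in fact produces an effective procedure for converting a multi-polynomial definition into a single-polynomial one, mirroring the effectiveness already established in the preceding lemma.
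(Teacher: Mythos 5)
Your proof is correct and matches the route the paper intends: the corollary is an immediate consequence of the preceding lemma (Replacing Finitely Many by One over $\Z$), applied via the sum-of-squares construction $\sum_j p_j^2$, and your write-up just makes explicit the uniformity in the parameters $T_i$ that the paper leaves implicit.
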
%
One surprisingly useful tool for writing Diophantine definitions has to do with an elementary property of GCD's (greatest common divisors).
\begin{proposition}
If $a, b \in \Z_{\not = 0}$ with $(a,b)=1$ then there exist $x, y \in \Z$ such that $ax + by =1$.
\end{proposition}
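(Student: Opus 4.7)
The plan is to prove this via the standard well-ordering argument, which is both short and makes the existence of $x, y$ essentially algorithmic (via the Euclidean algorithm, which the proof implicitly invokes).

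First I would introduce the set
\[
S = \{ax + by : x, y \in \Z,\ ax + by > 0\} \subset \Z_{>0}.
\]
I would check $S$ is nonempty by noting $|a| = a\cdot\mathrm{sgn}(a) + b\cdot 0 \in S$ since $a \neq 0$. By the well-ordering of $\N$, the set $S$ has a least element $d = ax_0 + by_0$ for some $x_0, y_0 \in \Z$.

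The main step is to show $d \mid a$ and $d \mid b$. I would use the division algorithm: write $a = qd + r$ with $0 \le r < d$. Then
\[
r = a - qd = a - q(ax_0 + by_0) = a(1-qx_0) + b(-qy_0),
\]
which is again a $\Z$-linear combination of $a$ and $b$. If $r > 0$ we would have $r \in S$ with $r < d$, contradicting the minimality of $d$; hence $r = 0$, i.e.\ $d \mid a$. The identical argument with $b$ in place of $a$ gives $d \mid b$.

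Finally, since $d$ is a positive common divisor of $a$ and $b$, it divides $\gcd(a,b) = 1$, so $d = 1$. The equation $d = ax_0 + by_0$ then reads $ax_0 + by_0 = 1$, which produces the required $x = x_0$, $y = y_0$. There is no real obstacle in this proof; the only point to be careful about is handling the signs when showing $S$ is nonempty (which is why I use $|a|$ rather than $a$ itself), and observing that the set of $\Z$-linear combinations of $a$ and $b$ is closed under subtraction, which is what makes the division-algorithm step go through.
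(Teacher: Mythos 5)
Your proof is correct; it is the standard well-ordering argument for B\'ezout's identity: one shows the least positive $\Z$-linear combination of $a$ and $b$ divides both, hence divides $\gcd(a,b)=1$. The one thing worth noting is that the paper itself gives \emph{no} proof of this proposition---it is stated as a background fact about GCDs (it is classical, going back to B\'ezout, and is assumed known), so there is nothing to compare against. Every step of your argument is sound: the nonemptiness check via $|a|$, the division-algorithm step (which relies, as you observe, on the set of $\Z$-linear combinations being closed under integer multiples and subtraction), and the conclusion $d \mid \gcd(a,b)=1$. A minor stylistic remark: once you know $d \mid a$ and $d \mid b$, you could also conclude directly that $d \le \gcd(a,b) = 1$ (since $d$ is a positive common divisor), which is equivalent but avoids invoking ``divides the gcd''---either way the proof is complete.
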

The GCD's can be used to show that the set of non-zero integers is Diophantine and thus allow us to require that values of variables are not equal, as well as to perform ``division'' as will be shown later.  On a more theoretical level we can say that the \emph{positive} existential theory of $\Z$ is the same as the existential theory of $\Z$.
\begin{proposition}%
The set of non-zero integers has the following Diophantine definition over $\Z$:
\[
\{t \in \Z|\exists x,u,v\in \Z: (2u -1)(3v-1)=tx\}
\]
\end{proposition}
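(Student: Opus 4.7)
The plan is to verify the two implications separately. The forward direction (if such $x,u,v$ exist then $t \neq 0$) is immediate: if $t = 0$, the equation becomes $(2u-1)(3v-1) = 0$, which would force either $2u = 1$ or $3v = 1$, and neither has a solution in $\Z$. So the interesting direction is the converse: given $t \neq 0$, exhibit integers $u,v,x$ with $(2u-1)(3v-1) = tx$, i.e., show that $t$ divides $(2u-1)(3v-1)$ for some choice of $u,v$.

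The key observation is that as $u$ ranges over $\Z$, the expression $2u-1$ takes \emph{every} odd value, and as $v$ ranges over $\Z$, the expression $3v-1$ takes every value that is coprime to $3$ (in fact every value congruent to $-1$ mod $3$). So I would factor $t = 2^{a} 3^{b} s$ with $\gcd(s,6) = 1$, and separately arrange $2u-1$ to absorb the odd part $3^{b} s$ of $t$, and $3v-1$ to absorb the power $2^{a}$.

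Concretely, since $3^{b}s$ is odd, the congruence $2u \equiv 1 \pmod{3^{b}s}$ has a solution $u \in \Z$; for this $u$, we get $2u-1 = 3^{b} s \cdot k$ for some $k \in \Z$. Since $\gcd(3, 2^{a}) = 1$, the congruence $3v \equiv 1 \pmod{2^{a}}$ has a solution $v \in \Z$; for this $v$, we get $3v-1 = 2^{a} \cdot \ell$ for some $\ell \in \Z$. Multiplying,
\[
(2u-1)(3v-1) \;=\; 2^{a} 3^{b} s \cdot k\ell \;=\; t \cdot k\ell,
\]
so setting $x = k\ell$ gives the required solution.

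I do not anticipate any real obstacle here; the only point worth being careful about is making sure that the odd part and the power of $2$ in $t$ can be handled independently by the two factors $2u-1$ and $3v-1$, which is exactly why the definition uses these particular linear forms (one forced to be coprime to $2$, the other forced to be coprime to $3$). The implicit takeaway, worth recording, is that the same argument works with $2u-1, 3v-1$ replaced by any pair $au - 1, bv - 1$ with $\gcd(a,b) = 1$, so long as every prime divisor of $t$ divides at least one of $a,b$ — but here $a = 2, b = 3$ suffice because every integer factors into powers of $2,3$ and a part coprime to $6$, and that coprime part is odd so it can be swallowed by $2u-1$.
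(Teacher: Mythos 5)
Your proof is correct and takes essentially the same approach as the paper: you factor $t$ so that one factor is odd (handled by $2u-1$ via invertibility of $2$ modulo that factor) and the other is coprime to $3$ (handled by $3v-1$), and multiply. The paper phrases the same step via Bézout identities $2u + t_2x_u = 1$ and $3v + t_3x_v = 1$ rather than congruences, but the content is identical.
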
%
\begin{proof}%
If $t=0$, then either $2u-1=0$ or $3v-1=0$ has a solution in $\Z$, which is impossible.
Suppose now $t \not=0$. Write $t=t_2t_3$, where $t_2$ is odd and $t_3 \not \equiv 0 \mod 3$.  Since
$(t_2,2)=1$ and $(t_3,3)=1$, by a property of GCD  there exist $u, x_u, v, x_v \in \Z$ such
that $$2u + t_2x_u=1$$ and $$3v + t_3x_v=1.$$   Thus $(2u-1)(3v-1)=t_2x_ut_3x_v=t(x_ux_v)$.
\end{proof}%
Another important Diophantine definition allows us to convert inequalities into equations.
\begin{lemma}[Diophantine definition of the set of non-negative integers]
From Lagrange's Theorem we get the following representation of non-negative integers:
\[
\{ t \in \Z | \exists x_1, x_2, x_3, x_4: t=x_1^2 + x_2^2 +x_3^2 +x_4^2 \}
\]
\end{lemma}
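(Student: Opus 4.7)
The plan is to verify that the set $\{x_1^2+x_2^2+x_3^2+x_4^2 \mid x_1,x_2,x_3,x_4\in\Z\}$ coincides exactly with $\Z_{\geq 0}$. This is a pure equivalence to check, and Lagrange's four-square theorem does all the heavy lifting, so the ``proof'' is essentially just unpacking.

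First I would handle the easy direction: if $t\in\Z$ admits a representation $t=x_1^2+x_2^2+x_3^2+x_4^2$ with $x_i\in\Z$, then $t$ is a sum of non-negative integers and hence $t\geq 0$. This uses nothing beyond the fact that squares of integers are non-negative.

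Next I would invoke the content of Lagrange's theorem for the converse: any $t\in\Z_{\geq 0}$ can be written as a sum of four integer squares. I would treat this as a cited classical result (it is named in the statement of the lemma itself), rather than reprove it — a proper proof would require descent arguments involving sums of two squares modulo primes and Euler's four-square identity, which is well outside the scope of this expository section.

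Combining the two directions gives $t\in\Z_{\geq 0}$ if and only if $\exists x_1,x_2,x_3,x_4\in\Z$ with $t=x_1^2+x_2^2+x_3^2+x_4^2$, which is exactly the claimed Diophantine definition. The main (and only) obstacle is Lagrange's theorem, which is assumed as background; everything else reduces to the observation that a sum of squares is non-negative. Since the resulting polynomial relation $t - x_1^2 - x_2^2 - x_3^2 - x_4^2 = 0$ has coefficients in $\Z$, this genuinely qualifies as a Diophantine definition in the sense introduced earlier in the paper.
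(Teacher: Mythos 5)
Your proposal is correct and matches the paper exactly: the paper offers no separate proof, simply citing Lagrange's four-square theorem in the lemma statement itself, and your two-direction unpacking (sums of squares are non-negative; Lagrange gives the converse) is precisely what that citation is standing in for.
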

\subsection{Becoming More Ambitious}%
The questioned posed by Hilbert about the ring of integers can of course be asked about any recursive ring $R$: is there an algorithm, which if given an arbitrary polynomial equation in several variables with coefficients in $R$, can determine whether this equation has solutions in $R$?  Arguably, the most prominent open questions in the area are the decidability of an analog of Hilbert's Tenth Problem for $R=\Q$ and $R$ equal to the ring of integers of an arbitrary number field.  The recent developments concerning these two problems are the main subjects of this exposition.

Before we proceed further with our discussion of HTP over $\Q$ we would like to point out that it is not hard to  see that decidability of HTP over $\Z$ would imply decidability of HTP for $\Q$. Indeed, suppose we knew how to determine whether solutions exist over $\Z$.  If $Q(x_1,\ldots,x_k)$ is a polynomial with integer coefficients, then
\[
\exists x_1,\ldots,x_k \in \Q: Q(x_1,\ldots,x_k)=0
\]
\[
\bigm\Updownarrow
\]
\[
\exists y_1,\ldots,y_k, z_1, \ldots, z_k \in \Z: Q(\frac{y_1}{z_1},\ldots,\frac{y_k}{z_k})=0 \land z_1\ldots z_k \not =0,
\]
where we remind the reader we can rewrite $z_1\ldots z_k \not =0$ as a polynomial equation and convert the resulting finite system of equations into a single one. So if we can determine whether the resulting equation had solutions over $\Z$, we can determine whether the original equation had solutions over $\Q$.  Unfortunately, the reverse implication does not work: we don't know of any easy way to derive the undecidability of HTP over $\Q$ from the analogous result over integers.  As a matter of fact, as we will see below we don't know of any way  of deriving the undecidability of HTP over $\Q$ (we are not even sure the problem is undecidable over $\Q$).

One of the earliest methods suggested for showing that HTP was undecidable over $\Q$ used Diophantine definitions.  This idea can be summarized in the following lemma:
\begin{lemma}%
If $\Z$ has a Diophantine definition $p(T,\bar X)$ over $\Q$, then HTP is not decidable over $\Q$.
\end{lemma}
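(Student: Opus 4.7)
The plan is to reduce HTP over $\Z$, which is known to be undecidable by the MDRP theorem, to HTP over $\Q$, using the hypothesized Diophantine definition $p(T,\bar X)$ of $\Z$ over $\Q$. So I would assume for contradiction that HTP is decidable over $\Q$, and then show that this would yield a decision procedure for HTP over $\Z$.

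Let $Q(x_1,\ldots,x_k) \in \Z[x_1,\ldots,x_k]$ be an arbitrary input for HTP over $\Z$. The key idea is that the Diophantine definition $p$ lets me \emph{force} rational variables to take integer values: by definition of $p$, a rational number $t$ lies in $\Z$ if and only if there exist rational tuples $\bar x$ with $p(t,\bar x)=0$. So for each variable $x_i$ I introduce a fresh tuple $\bar X^{(i)}$ of rational unknowns and impose the equation $p(x_i,\bar X^{(i)})=0$. Combined with $Q(x_1,\ldots,x_k)=0$, this produces a finite system of polynomial equations over $\Q$:
\[
Q(x_1,\ldots,x_k)=0, \quad p(x_1,\bar X^{(1)})=0, \ \ldots,\ p(x_k,\bar X^{(k)})=0.
\]

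Now I apply the earlier lemma on replacing a finite system by a single equation over an integral domain whose fraction field is not algebraically closed; since $\Q$ is its own fraction field and is not algebraically closed, this applies, and the procedure is effective (one can explicitly exhibit, e.g., $X^2+1$ as a polynomial without rational roots). Call the resulting single polynomial equation $F(x_1,\ldots,x_k,\bar X^{(1)},\ldots,\bar X^{(k)})=0$. By construction, $F=0$ has a solution in $\Q$ if and only if there exist rational $x_1,\ldots,x_k$ with $p(x_i,\bar X^{(i)})=0$ for each $i$ and $Q(x_1,\ldots,x_k)=0$, which by the definition of $p$ as a Diophantine definition of $\Z$ happens exactly when there exist integers $x_1,\ldots,x_k$ with $Q(x_1,\ldots,x_k)=0$.

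Therefore, feeding $F$ into the assumed decision procedure for HTP over $\Q$ would decide whether $Q=0$ has integer solutions, contradicting the undecidability of HTP over $\Z$. There is no genuine obstacle here; the only point requiring care is to note that the reduction is effective, which amounts to checking that the construction of $F$ from $Q$ and $p$ is algorithmic — and this follows from the recursive nature of the finite-system-to-single-equation construction over $\Q$ discussed in the preceding remark.
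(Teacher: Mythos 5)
Your proof is correct and follows essentially the same reduction as the paper: attach $p(x_i,\bar X^{(i)})=0$ to each variable of an integer polynomial $Q$ so that rational solvability of the augmented system is equivalent to integer solvability of $Q=0$, then invoke the undecidability of HTP over $\Z$. The only cosmetic difference is that you explicitly collapse the system to a single equation and note effectivity, steps the paper leaves implicit by appeal to its earlier lemmas.
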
%
\begin{proof}%
Let $h(T_1,\ldots,T_l)$ be a polynomial with rational integer coefficients and consider the following system of
equations.
\begin{equation} \label{eq:1} \left \{%
\begin{array}{c} h(T_1,\ldots,T_l) = 0\\%
p(T_1,\bar X_1)=0\\%
\ldots\\%
p(T_l,\bar X_l)=0%
\end{array}%
\right .%
\end{equation}%
It is easy to see that $h(T_1,\ldots,T_l) = 0$ has solutions in $\Z$ if and only if system (\ref{eq:1}) has solutions in $\Q$. Thus if
HTP is decidable over $\Q$, it is decidable over $\Z$.
\end{proof}%
Unfortunately, the Diophantine definition plan quickly ran into problems.
\section{Complications}
\subsection{Some Unpleasant Thoughts}
In 1992 Barry Mazur formulated a series of conjectures which were to play an important role in the development of the subject (see \cite{M1}, \cite{M2}, \cite{M4}).  Before we state one of these conjectures, we need a definition.
\begin{definition}[Affine Algebraic Sets and Varieties.]
If $\{p_1(x_1,\ldots,x_m),\ldots, p_k(x_1,\ldots,x_m)\}$ is a finite set of polynomial equations over some field $K$, then the set of common zeros of these polynomials in $K^m$ is  called an algebraic set.  An algebraic set which is irreducible, i.e. is not a union of non-empty algebraic sets, is called a variety.
\end{definition}
Mazur's conjectures on the topology of rational points  is stated below:
\begin{conjecture}[Topology of Rational Points]
\label{conjecture:1}
Let $V$ be any variety over $\Q$. Then the topological closure of $V(\Q)$ in $V(\R)$ possesses at most a finite number of connected components.%
\end{conjecture}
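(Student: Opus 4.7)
Before sketching anything, it is worth recording that Mazur's topology conjecture is precisely that --- a conjecture, open in full generality ever since it was formulated. Any ``proof proposal'' I write is therefore really a strategy for attacking it, not a plan I expect to carry out in a couple of pages. With that caveat, the plan is to stratify $V$ by the qualitative behavior of its rational points and hope the conjecture falls out case by case.

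First I would reduce to the case where $V(\R)$ is a smooth, connected real-analytic manifold: desingularize $V$ if necessary, pick one real-connected component at a time, and recall that $V(\R)$, being a real semialgebraic set, has only finitely many connected components to start with (this already pins down the ``ambient'' finiteness; the content of the conjecture is that the closure of $V(\Q)$ does not carve these into infinitely many pieces). Then I would split into three regimes according to the rough trichotomy rational / abelian / general type.

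In the rational or unirational case, $V(\Q)$ is Zariski-dense and --- at smooth real points lying on a rational curve defined over $\Q$ --- also analytically dense in $V(\R)$, so the closure is essentially all of $V(\R)$ and we are done by the semialgebraic finiteness recalled above. For abelian varieties, Mordell--Weil gives that $V(\Q)$ is a finitely generated abelian subgroup of the compact real Lie group $V(\R)$; the closure of such a subgroup is a closed Lie subgroup and therefore has only finitely many connected components, which settles this case unconditionally. For varieties of general type, the Bombieri--Lang conjecture predicts that $V(\Q)$ is not Zariski-dense, which (by induction on dimension applied to the Zariski closure of $V(\Q)$ and to each irreducible component) would reduce us to lower-dimensional varieties and eventually to the Faltings/Mordell theorem, where $V(\Q)$ is finite and the claim is trivial.

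The hard part --- and this is where I expect no honest strategy to succeed --- is the intermediate regime: varieties, for instance certain surfaces or threefolds, whose Kodaira dimension and geometric structure fit none of the above templates, and for which no uniform description of $V(\Q)$ is known. Here the obstacles are two-fold: one needs a geometric mechanism forbidding $V(\Q)$ from accumulating along a transcendental real-analytic locus that cuts off infinitely many components, and one needs enough control over $V(\Q)$ to detect such accumulation. The first would require, at the very least, a proof of Bombieri--Lang (itself wide open), and the second appears to lie beyond the Mordell--Weil/Faltings toolkit currently available. In short, my ``proof'' is really a reduction of Mazur's conjecture to Bombieri--Lang plus a structural analysis of intermediate-type varieties, and I would expect the intermediate case to be the genuine bottleneck rather than a routine epilogue.
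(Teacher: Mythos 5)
There is nothing to compare against: the statement you were given is Mazur's Conjecture on the Topology of Rational Points, and the paper states it \emph{as a conjecture} with no proof, because none exists --- it remains open to this day. You recognize this explicitly at the outset, which is exactly the right call; the only ``proof'' the paper offers is the label \texttt{conjecture} itself.

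Since you went to the trouble of sketching a program, a few remarks on it. The abelian-variety case is the one piece you have genuinely nailed down: Mordell--Weil plus Cartan's closed-subgroup theorem in the compact Lie group $A(\R)$ does give a closed Lie subgroup with finitely many components, and this is the standard ``easy case.'' The rational/unirational case is shakier than you let on: a $\Q$-rational parametrization $\PP^n \dashrightarrow V$ need not have image whose rational points are dense in all of $V(\R)$ (some real components may carry no rational points at all, and the parametrization may miss parts of others), so ``closure is essentially all of $V(\R)$'' is not automatic; what saves you is that the closure of the image of $\PP^n(\Q)$ should still be semialgebraic, but that requires an argument rather than the density claim you made. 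The general-type case you correctly condition on Bombieri--Lang, and you correctly identify the intermediate Kodaira dimensions --- K3 surfaces, certain elliptic fibrations, and the like --- as the genuine bottleneck where no structural control over $V(\Q)$ is available. That is indeed where the conjecture stands, so your honest assessment is accurate even if the sketch cannot (and should not be expected to) close the gap.
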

This conjecture had an unpleasant consequence.
\begin{conjecture}
There is no Diophantine definition of $\Z$ over $\Q$.
\end{conjecture}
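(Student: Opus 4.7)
The plan is to argue by contradiction using the continuous projection from the affine variety cut out by a hypothetical Diophantine definition of $\Z$ down to the $T$-axis in $\R$. Suppose $p(T,X_1,\ldots,X_k)$ is a Diophantine definition of $\Z$ over $\Q$, and let $V \subseteq \A^{k+1}$ be the affine algebraic set $\{p=0\}$. By the definition of a Diophantine set, the projection $\pi\colon V(\Q) \to \Q \subseteq \R$ onto the first coordinate has image exactly $\Z$.

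First I would reduce to the irreducible case required by Conjecture \ref{conjecture:1}. Decompose $V = V_1 \cup \cdots \cup V_r$ into irreducible components over $\Q$; then $V(\Q) = \bigcup_i V_i(\Q)$ and $V(\R) = \bigcup_i V_i(\R)$, and the topological closure $\overline{V(\Q)}$ of $V(\Q)$ in $V(\R)$ is the union of the closures $\overline{V_i(\Q)}$ inside each $V_i(\R)$. Applying Mazur's conjecture to each variety $V_i$, each $\overline{V_i(\Q)}$ has finitely many connected components, so $\overline{V(\Q)}$ has only finitely many connected components as well.

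Next I would push this finiteness down through $\pi$. The projection $\pi\colon V(\R)\to\R$ is continuous, so $\pi\bigl(\overline{V(\Q)}\bigr)$ is a finite union of connected subsets of $\R$. On the other hand, since $\Z$ is closed in $\R$, continuity gives
\[
\Z = \pi(V(\Q)) \;\subseteq\; \pi\bigl(\overline{V(\Q)}\bigr) \;\subseteq\; \overline{\pi(V(\Q))} = \overline{\Z} = \Z,
\]
so $\pi\bigl(\overline{V(\Q)}\bigr) = \Z$.

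The finale is the topological observation that every connected subset of $\R$ is an interval, and any interval whose points all lie in $\Z$ is a single point or empty. Hence a finite union of connected subsets of $\R$ contained in $\Z$ is finite, while $\Z$ is infinite, a contradiction.

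The step I expect to require the most care is the reduction to irreducible components and the verification that the closure of $V(\Q)$ in $V(\R)$ decomposes as the union of the closures of the $V_i(\Q)$ in $V_i(\R)$; the rest is a matter of combining the continuity of $\pi$ with the elementary classification of connected subsets of $\R$. The genuine mathematical content all sits in Mazur's conjecture itself; the deduction above is a short topological transfer argument.
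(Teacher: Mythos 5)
Your proof is correct and takes essentially the same approach that the paper sketches in prose immediately after the conjecture: Mazur's conjecture bounds the number of connected components of $\overline{V(\Q)}$ in $V(\R)$, the continuous projection to $\R$ carries these to finitely many intervals, and an infinite discrete set such as $\Z$ cannot be a finite union of intervals contained in $\Z$. The paper itself merely cites Remark 11.1.2 of \cite{Sh34} for the passage from algebraic sets to varieties; your explicit decomposition into $\Q$-irreducible components, together with the observation that the closure of a finite union is the union of the closures, is the standard way to fill in that step.
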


Mazur's conjecture also refers to projective varieties, but it is the affine variety case which has the most consequences for HTP over $\Q$.  We should also note that one can replace ``variety'' by ``algebraic set'' without changing the scope of the conjecture.  (See Remark 11.1.2 of \cite{Sh34}.)     As a  matter of  fact, if Conjecture \ref{conjecture:1} is true, no infinite and discrete (in the archimedean topology) set has a Diophantine definition over $\Q$.  The affine version of Mazur's conjecture can be thought of in the following manner.  Suppose you are given a system of polynomial equations:
\begin{equation}%
\label{sys:Maz}
\left \{%
\begin{array}{c}%
P_1(x_1,\ldots,x_k)=0\\
P_2(x_1,\ldots,x_k)=0\\
\ldots \\
P_m(x_1,\ldots,x_k)=0\\
\end{array}%
\right .
\end{equation}%
Think of solutions to this system as points in $\R^k$ but consider only the points whose coordinates are rational numbers.  In
other words we are interested in the set
\[%
RP=\{(x_1,\ldots,x_k) \in \Q^k: (x_1,\ldots,x_k) \mbox{ is a solution to system (\ref{sys:Maz})}\}.
\]%

Now take the topological closure of $RP$ in $\R^k$ (i.e. the points plus the ``boundary'').  Mazur's conjecture asserts that the resulting set will have finitely many ``connected pieces'' also known as connected components. It is the finite number of these components that precludes Diophantine definability of infinite discrete subsets.

\subsection{Introducing New Models}
Since the plan to construct a Diophantine definition of $\Z$ over $\Q$ ran into substantial difficulties, alternative ways were considered for showing that HTP had no solution over $\Q$.  One of the alternative methods required construction of a Diophantine model of $\Z$.
\begin{definition}[Diophantine Model of $\Z$]
Let $R$ be a recursive ring whose fraction field is not algebraically closed and let $\phi : \Z \longrightarrow R^k$ be a
recursive injection mapping Diophantine sets of $\Z$ to Diophantine sets of $R^k$. Then $\phi$ is called a Diophantine
model of $\Z$ over $R$.
\end{definition}
\begin{remark}[An Alternative Terminology from Model Theory]
Model theorist have an alternative terminology for a map described above.  They would translate the statement  that $R$ has a Diophantine model of $\Z$ as $\Z$ being existentially definably interpretable in $R$.  (See Chapter 1, Section 3 of \cite{Marker}.)
\end{remark}
It is not hard to see that sending Diophantine sets to Diophantine sets makes the map automatically recursive.  The recursiveness of the map follows from the fact that the $\phi$-image of the graph of addition is Diophantine and therefore is recursively enumerable (by the same argument as over $\Z$). Thus, we have an effective listing of the set
\[%
D_+=\{(\phi(m),\phi(n),\phi(m+n)), m, n \in \Z\}.
\]%
Assume we have computed $\phi(r-1)$ for some positive integer $r$. Now start listing $D_{+}$ until we come across a triple whose first two entries are $\phi(r-1)$ and $\phi(1)$. The third element of the triple must be $\phi(r)$.  We can simplify the requirements for the map further.
\begin{proposition}
If $R$ is a recursive ring and  $\phi: \Z \longrightarrow R^k$ is injective for some $k \in \Z_{>0}$,  then $\phi$ is a Diophantine model if and only if the images of the graphs of $\Z$-addition and $\Z$-multiplication are Diophantine over $R$.
\end{proposition}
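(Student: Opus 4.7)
The plan is to prove both directions separately. The forward direction is essentially immediate: the graphs of $\Z$-addition and $\Z$-multiplication, $\{(a,b,c) \in \Z^3 : a+b=c\}$ and $\{(a,b,c) \in \Z^3 : ab=c\}$, are cut out respectively by the polynomials $T_1+T_2-T_3$ and $T_1 T_2-T_3$, hence Diophantine in $\Z^3$, so by the very definition of a Diophantine model their $\phi^3$-images in $R^{3k}$ are Diophantine.

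For the reverse direction, assume the images $G_+, G_\cdot \subseteq R^{3k}$ of the two graphs are Diophantine. The argument given in the paragraph preceding the proposition shows that $\phi$ is automatically recursive: $G_+$ is Diophantine, hence recursively enumerable in $R^{3k}$, and an effective enumeration of $G_+$ lets one compute $\phi(r)$ inductively from $\phi(r-1)$ and $\phi(1)$. It remains to show that $\phi$ sends every Diophantine set to a Diophantine set. Let $A \subseteq \Z^m$ be defined by $p(T_1,\ldots,T_m,X_1,\ldots,X_n)=0$ with integer coefficients. I would unravel the single equation $p=0$ into a finite conjunction of elementary equations of the form $Y_i + Y_j = Y_l$ or $Y_i \cdot Y_j = Y_l$ over $\Z$ by introducing one fresh auxiliary integer variable for each intermediate sum or product in a step-by-step evaluation of $p$. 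Each integer coefficient $c$ appearing in $p$ contributes a fixed constant $\phi(c) \in R^k$, which, because $\phi$ is now known to be recursive, is explicitly computable and admissible as a constant $k$-tuple in a polynomial over $R$. Each elementary equation translates directly into the Diophantine condition that the associated triple of $\phi$-images lies in $G_+$ or $G_\cdot$, and I would add the condition that each $T_i$-coordinate lies in the set $\phi(\Z) \subseteq R^k$, which is itself Diophantine as the projection of $G_+$ onto the last $k$ coordinates (every $c \in \Z$ equals $0+c$). Taking the conjunction of all these Diophantine conditions and collapsing it into a single polynomial equation via the corollary on replacing several equations by one---valid because by definition of a Diophantine model the fraction field of $R$ is not algebraically closed---produces a Diophantine definition of $\phi^m(A)$ over $R$.

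The main obstacle is the verification that the assembled system has exactly the right solution set. Specifically, one must confirm that any $R^k$-tuple of auxiliary values satisfying all the $G_+$ and $G_\cdot$ relations corresponds, via $\phi^{-1}$, to a genuine step-by-step evaluation of $p$ at integer arguments, and conversely, so that no spurious solutions outside $\phi(\Z)^{mk}$ sneak into $\phi^m(A)$. Since each $G_+$ and $G_\cdot$ relation forces its coordinates to lie in $\phi(\Z)$ and is the graph of a function there, this is routine but notationally delicate, and requires care in the bookkeeping of fresh variables and in the encoding of the particular integer coefficients of $p$ as constant $k$-tuples.
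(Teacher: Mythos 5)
Your proof is correct and follows the route the paper alludes to (``a straightforward induction argument,'' left to the reader): induct on the structure of the defining polynomial by introducing auxiliary variables for intermediate sums and products, translate each elementary step into a membership condition in $G_+$ or $G_\cdot$, and note that $G_+$, $G_\cdot$ are contained in $\phi(\Z)^3$ so no spurious values escape $\phi(\Z)$. The identification of $\phi(\Z)$ as a projection of $G_+$ and the use of the recursiveness of $\phi$ to realize each constant $\phi(c)$ as an admissible $k$-tuple of coefficients are both the right moves, as is the observation (mirroring the paragraph before the proposition) that Diophantineness of $G_+$ alone already forces $\phi$ to be recursive.

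One small wording issue: you justify collapsing the system into a single equation by saying ``by definition of a Diophantine model the fraction field of $R$ is not algebraically closed.'' In the direction you are proving you are trying to \emph{establish} that $\phi$ is a Diophantine model, so you cannot appeal to its consequences; rather, the non-algebraically-closed fraction field is a standing hypothesis on $R$ that the paper imposes whenever Diophantine models are under discussion. This is not a genuine gap---one can either treat that hypothesis as implicitly present in the proposition, or dispense with the collapse altogether by allowing a Diophantine definition to be a finite system of equations---but the phrasing should be adjusted to avoid the appearance of circularity.
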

This proposition  can be proved by a straightforward induction argument which we do not reproduce here.

It quite easy to see that the following proposition holds.
\begin{proposition}
If $R$ is a recursive ring with a Diophantine model of $\Z$, then HTP has no solution over $R$.
\end{proposition}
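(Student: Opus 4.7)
The plan is to derive the undecidability of HTP over $R$ from the undecidability of HTP over $\Z$ by transporting an undecidable Diophantine subset of $\Z$ across the model $\phi$, and then using HTP over $R$ to decide its image.

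First I would fix an undecidable Diophantine set $A\subseteq\Z$, whose existence was recorded as a corollary of the MDRP theorem in the Prologue. Since $\phi:\Z\to R^k$ is a Diophantine model, the image $\phi(A)\subseteq R^k$ is Diophantine over $R$, so there is a polynomial $P(T_1,\ldots,T_k,X_1,\ldots,X_m)$ with coefficients in $R$ such that a $k$-tuple $(r_1,\ldots,r_k)\in R^k$ lies in $\phi(A)$ iff $\exists \bar x\in R^m: P(r_1,\ldots,r_k,\bar x)=0$. (If the Diophantine definition is given by several polynomials, I invoke the earlier lemma on replacing finitely many equations by one, which applies because the fraction field of $R$ is not algebraically closed, as is built into the definition of a Diophantine model.)

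Now I would assume for contradiction that HTP over $R$ is decidable, and use this to build an algorithm for membership in $A$. Given an input $n\in\Z$, the algorithm computes $\phi(n)\in R^k$; since $R$ is recursive and $\phi$ is a recursive map (which follows automatically from $\phi$ sending Diophantine sets to Diophantine sets, as explained in the excerpt), this computation is effective. It then substitutes the coordinates of $\phi(n)$ for $T_1,\ldots,T_k$ in $P$ and asks the assumed HTP-oracle for $R$ whether
\[
P(\phi(n)_1,\ldots,\phi(n)_k,X_1,\ldots,X_m)=0
\]
has a solution in $R^m$. By injectivity of $\phi$, the answer is ``yes'' iff $\phi(n)\in\phi(A)$ iff $n\in A$. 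This would give a decision procedure for $A$, contradicting its undecidability.

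The only subtle point, and the one I would be most careful with, is the passage from the $R$-coefficient polynomial $P$ to a genuine instance of HTP over $R$: we must be able to substitute the coordinates of $\phi(n)$ (which are computable elements of $R$) as coefficients in an effective way, and then feed the resulting polynomial into the hypothetical HTP algorithm. This is routine because $R$ is recursive and the ring operations are translated by recursive functions, so the substitution is algorithmic; the rest of the argument is just bookkeeping.
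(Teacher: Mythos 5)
Your proof is correct and follows essentially the same route as the paper's: transport an undecidable Diophantine subset $A$ of $\Z$ to $\phi(A)$, use recursiveness and injectivity of $\phi$ to reduce membership in $A$ to membership in $\phi(A)$, and then observe that an HTP algorithm for $R$ would decide the latter. The paper phrases this as first establishing that $R$ has an undecidable Diophantine set and then invoking the same $\Z$-style reduction from undecidable Diophantine sets to HTP, whereas you fuse the two steps into a single explicit decision procedure; the substance is identical.
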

\begin{proof}
If $R$ has a Diophantine model of $\Z$, then $R$ has undecidable Diophantine sets, and the existence of undecidable Diophantine sets over $R$ leads us to the undecidability of HTP over $R$ in the same way as it happened over $\Z$.   To show that $R$ has undecidable Diophantine sets,  let $A \subset \Z$
be an undecidable Diophantine set and suppose we want to determine whether an integer $n \in A$.  Instead of answering
this question directly we can ask whether $\phi(n) \in \phi(A)$.  By assumption $\phi(n)$ is algorithmically computable.
 So if $\phi(A)$ is a computable subset of $R$, we have a contradiction.
\end{proof}%
It now follows that constructing a Diophantine model of $\Z$ over $\Q$ will solve our problem.
\subsection{A Steep Curve}
\label{subsec:oldplan}
An old plan for building a Diophantine model of $\Z$ over $\Q$ involved using elliptic curves.
Consider an equation of the form:   \begin{equation} \label{elliptic} y^2=x^3+ax+b,\end{equation} where $a, b \in \Q$ and $\Delta = -16(4a^3 + 27b^2) \not =0$.  This equation defines an elliptic curve  (a non-singular plane curve of genus 1).   Figure \ref{fig:1}  is the graph of such an elliptic curve $y^2=x^3-2x+1$ generated using Maple.
\begin{figure}
 \includegraphics[scale=0.5]{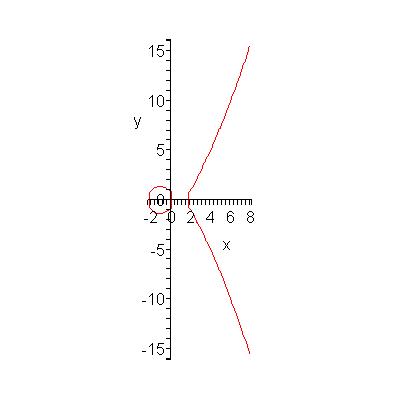}
  \caption{} \label{fig:1}
\end{figure}

  All the  points $(x,y) \in \Q^2$ satisfying \eqref{elliptic} (if any) together with $O$ -- the ``point at infinity'' form an abelian group, i.e. there is a way to define addition on the points of an elliptic curve with  $O$ serving as the identity. The group law on an elliptic curve can be represented geometrically (see for example \cite{Sil1}[Chapter III, \S 3]).  However, what is important to us is the algebraic representation of the group law.
Let $P=(x_P,y_P), Q=(x_Q,y_Q), R=(x_R,y_R)$ be the points on an elliptic curve $E$ with rational coordinates.  If $P+_E Q=R$ and $P, Q, R \not = O$, then
$x_R=f(x_P, y_P, x_Q, y_Q), y_R=g(x_P, y_P, x_Q, y_Q)$, where $f(z_1,z_2, z_3, z_4), g(z_1,z_2,z_3,z_4)$ are fixed (somewhat unpleasant looking) \emph{rational functions}.  Further, $-P =(x_P, -y_P)$.   Mordell-Weill Theorem (see \cite{Sil1}[Chapter III]) tells us that the abelian group formed by points of an elliptic curve over $\Q$ is finitely generated, meaning it has a finite rank and a finite torsion subgroup. It is also not very difficult to find elliptic curves whose rank is one. So let $E$ be such an elliptic curve defined over $\Q$ such that $E(\Q) \cong \Z$ as abelian groups. (In other words $E(\Q)$ has no torsion points.  In practice torsion points are not an impediment, but they do complicate the discussion.)  Let $P_1$ be a generator and consider a map sending an integer $n \not = 0$ to $[n]P=(x_n,y_n)$.  (We should also take care of 0, but we will ignore this issue for the moment.) The group law assures us that under this map \emph{the image of the graph of addition is Diophantine}.  Unfortunately,   it is not clear what happens to \emph{the image of the graph of multiplication}.  Nevertheless one might think that we have a starting point at least for our Diophantine model of $\Z$.  Unfortunately, it turns out that situation with Diophantine models is not any better than with Diophantine definitions.
\subsection{More Bad News}
A new piece of bad news came in the guise of a theorem of Cornelissen and Zahidi (see \cite{CZ}).
\begin{theorem}%
If Mazur's conjecture on topology of rational points holds, then there is no Diophantine model of $\Z$ over $\Q$.
\end{theorem}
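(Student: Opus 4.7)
The plan is to argue by contradiction. Suppose $\phi\colon\Z\to\Q^k$ is a Diophantine model of $\Z$ over $\Q$, and derive a violation of Mazur's finite-component condition.

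First I would observe that $\phi(\Z)\subseteq\Q^k$ is itself Diophantine over $\Q$. Since $n=n+0$ for every $n\in\Z$, each $\phi(n)$ appears as the first coordinate of a triple $(\phi(n),\phi(0),\phi(n))$ in the Diophantine image of the graph of $\Z$-addition, so $\phi(\Z)$ is a coordinate projection of a Diophantine set, and hence Diophantine. By Mazur's conjecture on the topology of rational points (in the Diophantine-set form noted in the remark following Conjecture~\ref{conjecture:1}), the topological closure $X=\overline{\phi(\Z)}\subseteq\R^k$ has only finitely many connected components $X_1,\ldots,X_r$. The same reasoning applied to the Diophantine graphs of addition and multiplication shows that $\overline{G_+}\subseteq\R^{3k}$ and $\overline{G_\times}\subseteq\R^{3k}$ likewise each have only finitely many components.

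Next I would combine this with Matiyasevich's theorem. Every recursively enumerable subset $B\subseteq\Z^n$ is Diophantine in $\Z$, and then via the model the coordinatewise image $\phi^{(n)}(B)\subseteq\Q^{nk}$ is Diophantine; Mazur's conjecture bounds the number of components of its closure. In particular, pigeonhole on the components $X_1,\ldots,X_r$ supplies an infinite (indeed Diophantine, using the graph $G_+$) subset $A\subseteq\Z$ with $\phi(A)$ contained in a single component $X_j$. Because $X_j$ cannot be a singleton (it contains infinitely many distinct rational points), it has positive topological dimension.

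The heart of the argument, and the main obstacle, is producing a specific Diophantine subset of some $\Z^n$ whose $\phi$-image must have closure with infinitely many components, contradicting Mazur. The idea I would pursue is to exploit Diophantine multiplication in the model: for each positive integer $m$ the map $n\mapsto mn$ has Diophantine graph, so the dilation of $\phi(A)$ by $m$ is a Diophantine subset of $\Q^k$. Combined with Diophantine additions this should allow one to ``spread out'' the accumulating family $\phi(A)\subseteq X_j$ into an infinite sequence of pairwise topologically separated clumps in $\R^k$; taking the union gives a single Diophantine set whose closure has infinitely many connected components, which is the desired contradiction. The technical heart is verifying that the purely algebraic dilations really yield genuine separation in the real topology — i.e.\ that the hypothetical ``copies'' of the accumulating family in $X_j$ do not all merge back together into boundedly many components of the closure. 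Equivalently, one must rule out the possibility that Diophantine multiplication in the model is topologically continuous enough on $X_j$ to keep all the dilated clumps inside a uniformly bounded subset of $\R^k$ — and it is here that I would expect to need the most care with the structure of the Diophantine graph $G_\times$ and its own finite-component closure.
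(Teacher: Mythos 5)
The survey does not actually prove this theorem; it is stated and attributed to Cornelissen and Zahidi with a citation to \cite{CZ}, so there is no internal proof to compare against. Judged on its own, your proposal has the right kind of target (produce a Diophantine subset of some $\Q^N$ whose closure in $\R^N$ has infinitely many connected components, contradicting Mazur's conjecture), and the preliminary observations that $\phi(\Z)$, $G_+$, $G_\times$ are Diophantine and so have finitely-componented real closures are correct. But there are two genuine gaps, and you already flag the second one yourself as the ``main obstacle'' without closing it, so what you have is a framework rather than a proof.

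First, the parenthetical assertion that the pigeonhole set $A=\phi^{-1}(X_j)$ is Diophantine ``using the graph $G_+$'' is unjustified. The component $X_j$ of $\overline{\phi(\Z)}$ is merely a closed subset of $\R^k$ supplied by Mazur's conjecture; there is no Diophantine (or even evidently computably enumerable) description of it, so membership of $\phi(n)$ in $X_j$ is not an existential condition on $n$, and $G_+$ being Diophantine does not change that. Second, and more fundamentally, the dilation plan does not yield a Diophantine set with infinitely-componented closure. Each individual dilate $\phi(mA)$ (granting $A$ Diophantine) is again a single Diophantine subset of $\Q^k$, so Mazur gives its closure finitely many components and no contradiction; while a union over infinitely many $m$ is not automatically Diophantine, and if one tries to make it so one is led straight back to the kind of uniform parametrization whose impossibility is the content of the theorem. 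Worse, multiplication by $m$ is carried into the model only as a Diophantine \emph{relation} on $\Q^k$, not as a self-map of $\R^k$ with any archimedean expansiveness; all the dilates lie inside $\overline{\phi(\Z)}$, which has only $r$ components, so there is no a priori separation of ``clumps'' at all. In short, the two places where your sketch leans on the model structure to produce archimedean content (Diophantineness of $A$, topological separation of dilates) are exactly the places where the argument would have to be, and neither is supplied. A correct proof must transport the arithmetic of $\Z$ into the real topology of $\phi(\Z)$ in a sharper way than pointwise dilation; as it stands the proposal identifies the goal but does not reach it.
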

This theorem left HTP over $\Q$ completely unapproachable.  It is often the case that faced with  intractable difficulties, mathematicians escape by changing the problem. Depending on one's point of view of the subject one could consider changing the problem in different ways: a number theorist might consider changing the object or possibly summon a big conjecture for assistance; a logician might reconsider the ban on all universal quantifiers.  Perhaps one or two should be allowed back.  As it turned out all these paths were explored. Below we describe these new problems starting with the ones which considered alternate objects.

\section{Big and Small}
\subsection{The Rings between $\Z$ and $\Q$}
\label{subsec:between}
The new objects which were introduced into the subject were the rings ``in between'' $\Z$ and $\Q$.
\begin{definition}[A Ring in between]
Let $\calS$ be a set of primes of $\Q$.  Let $O_{\Q,\calS}$ be the following subring of $\Q$.
\[%
\left \{\frac{m}{n}: m, n \in \Z, n \not = 0, n \mbox{ is divisible by primes of } \calS
\mbox{ only }\right \}
\]%
If $\calS = \emptyset$, then $O_{\Q,\calS}=\Z$.  If $\calS$ contains all the primes of $\Q$,  then
$O_{\Q,\calS}=\Q$.  If $\calS$ is finite, we call the ring \emph{small}.  If $\calS$ is infinite, we
call the ring \emph{large or big}, and if the natural density of $\calS$ is equal to 1,  we call the ring \emph{very large or very big}.%
\end{definition}%
Some of these rings have other (canonical) names: the small rings are also called rings of $\calS$-integers, and when $\calS$ contains all but finitely many primes, the rings are called semi-local subrings of $\Q$.   The definition of very large rings uses the notion of natural density of a prime set stated below.
\begin{definition}[Natural Density]%
If $\calA$ is a set of primes, then the natural density of $\calA$  is equal to the limit below (if it exists):
\[%
\lim_{X \rightarrow \infty}\frac{\#\{p \in A, p\leq X\}}{\# \{p \leq X\}}
\]%
\end{definition}

The big and small rings are not hard to construct.

\begin{example}[A Small Ring not Equal to $\Z$]
\[%
\{\frac{m}{3^a5^b}: m \in \Z, a, b \in \Z_{>0}\}
\]%
\end{example}

\begin{example}[A Big Ring not Equal to $\Q$]
\[%
\{\frac{m}{\prod p_i^{n_i}}:  p_i \equiv 1 \mod 4, n_i \in \Z_{>0}\}
\]%
\end{example}

Given a big or a small ring $R$ we can now ask the following questions which were raised above with respect to $\Q$:
\begin{itemize}
\item Is HTP solvable over $R$?
\item Do integers have a Diophantine definition over $R$?
\item Is there a Diophantine model of integers over $R$?
\end{itemize}
While the answers to these questions are interesting on their own right, the hope (possibly unjustified) is that understanding the big rings will eventually lead us to $\Q$.
\subsection{Diophantine Properties of Big and Small Rings}%
Before trying to answer the questions above, one should observe  that the big and small rings share many Diophantine properties with the integers:
 \begin{proposition}%
 \label{prop:dioph}
 \begin{enumerate}%
 \item The set of non-zero elements of a big or a small ring is Diophantine over the ring.
\item ``One=finitely many'' over big and small rings.%
\item   The set of non-negative elements of a big or a small ring $R$ is Diophantine over $R$: a small modification of the Lagrange
argument  is required to accommodate possible denominators
\[%
\{ t \in R| \exists x_1, x_2, x_3, x_4, x_5: x_5^2t=x_1^2 + x_2^2 +x_3^2 +x_4^2 \land x_5 \not=0 \}
\]%
\end{enumerate}%
\end{proposition}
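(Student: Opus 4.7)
The plan is to treat each of the three claims separately, adapting the arguments given earlier for $\Z$ to the localization $O_{\Q,\calS}$.

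For (1), I would imitate the $(2u-1)(3v-1)=tx$ construction from the $\Z$ case, replacing $2$ and $3$ by two distinct primes $q_1,q_2$ chosen from the complement of $\calS$. The ring $O_{\Q,\calS}$ is a localization of $\Z$, hence a PID; its units are precisely $\pm\prod_{p\in\calS}p^{a_p}$, so $q_1$ and $q_2$ remain non-unit primes in $O_{\Q,\calS}$. The proposed defining condition is
\[
\exists u,v,x\in O_{\Q,\calS}:(q_1u-1)(q_2v-1)=tx.
\]
If $t=0$ the equation forces $q_iu=1$ for some $i$, contradicting non-invertibility. If $t\neq 0$, I factor $t=t_1t_2$ in $O_{\Q,\calS}$ with $q_1\nmid t_1$ and $q_2\nmid t_2$ (extracting the $q_1$-part of $t$ into $t_2$); Bezout in the PID $O_{\Q,\calS}$ then produces $u,v$ with $t_1\mid(q_1u-1)$ and $t_2\mid(q_2v-1)$, whence $t=t_1t_2\mid(q_1u-1)(q_2v-1)$ as required. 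The extreme case $R=\Q$ is handled separately via $t\neq 0\Leftrightarrow\exists x:tx=1$.

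For (2), I would invoke the earlier lemma that replaces a finite system over an integral domain by a single equation, using that the fraction field of $O_{\Q,\calS}$ is $\Q$ for every $\calS$ and that $\Q$ is not algebraically closed.

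For (3), the stated Diophantine definition already appears in the proposition, so I only need to verify its equivalence and package it as a single polynomial condition. Given $t\geq 0$, write $t=a/b$ with $a,b\in\Z$, $b>0$, $a\geq 0$; Lagrange's four-square theorem applied to $ab\in\Z_{\geq 0}$ yields integers $x_1,\ldots,x_4$ with $ab=\sum x_i^2$, and multiplying by $b^{-2}$ gives $b^2t=\sum x_i^2$, so $x_5:=b\neq 0$ works. Conversely, $x_5\neq 0$ and $x_5^2t=\sum x_i^2$ exhibit $t$ as a quotient of non-negative elements by a square, hence non-negative. The side condition $x_5\neq 0$ is Diophantine by (1), and (2) lets me fuse it with $x_5^2t=\sum x_i^2$ into a single polynomial equation.

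The main obstacle lies in the edge cases of (1) where the complement of $\calS$ contains fewer than two primes, most delicately the semi-local case $R=\Z_{(q)}$: there the two-prime trick is unavailable and a substitute construction is needed (for instance, using the single prime $q$ together with auxiliary variables encoding the $q$-valuation of $t$). For the bulk of the big and small rings—all small rings, and all large rings of non semi-local type—the argument sketched above runs cleanly.
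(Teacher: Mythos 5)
The paper states this proposition without a written proof, so your argument stands or falls on its own; it is essentially the natural adaptation of the earlier $\Z$-arguments.  Parts (2) and (3) are fine: (2) follows directly from the lemma on replacing a finite system by a single equation over an integral domain whose fraction field is not algebraically closed, applied with fraction field $\Q$, and your verification of the four-squares formula in (3) via $ab=\sum x_i^2$ and $x_5=b$ is exactly the ``small modification'' the statement alludes to.

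Part (1), however, has a genuine gap that your closing paragraph flags but does not close, and the suggested repair would not work.  The two-prime construction needs two distinct primes outside $\calS$, and when the complement of $\calS$ is a single prime $q$ --- that is, for the semi-local ring $\Z_{(q)}$, which is a perfectly legitimate large ring --- no version of that trick survives: the $q$-adic valuation of $t$ is unbounded, so it cannot be ``encoded by auxiliary variables'' in a fixed polynomial, and any condition of the shape $(qu-1)\cdot(\,\cdots)=tx$ is unsatisfiable as soon as $q\mid t$ with $t\neq 0$, since the left side then has $q$-valuation zero while the right side has strictly positive $q$-valuation.  A clean repair, which in fact works uniformly for every $O_{\Q,\calS}$ at once (including $\Z$ and $\Q$, so it also subsumes your separate treatment of the field case), is
\[
t\neq 0 \ \Longleftrightarrow\ \exists\, x,y_1,y_2,y_3,y_4\in R:\ tx = 1 + y_1^2+y_2^2+y_3^2+y_4^2.
\]
If $t=0$ this would make $-1$ a sum of squares in $\R$.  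If $t=a/b\neq 0$ with $a,b\in\Z$, $b>0$, and every prime factor of $b$ lying in $\calS$, take $x=\pm b$ so that $tx=|a|\geq 1$, and apply Lagrange's theorem to the non-negative integer $tx-1$.  With (1) repaired this way, your deduction of (3) from (1) and (2) goes through unchanged.
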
%

It turned out that we already knew everything we needed to know about small rings from the work of Julia Robinson (see \cite{Rob1}).  In particular from her work on the first-order definability of integers over $\Q$ one can deduce the following theorem and corollary.
\begin{theorem}[Julia Robinson]
$\Z$ has a Diophantine definition over any small subring of $\Q$.
\end{theorem}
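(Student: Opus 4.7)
Write $R = O_{\Q, \calS} = \Z[p_1^{-1}, \ldots, p_k^{-1}]$ with $\calS = \{p_1, \ldots, p_k\}$ finite. The decisive observation is that an element $t \in R$ lies in $\Z$ if and only if $v_{p_i}(t) \geq 0$ for every $i = 1, \ldots, k$, because elements of $R$ are automatically integral at every prime outside $\calS$. Since a finite conjunction of Diophantine conditions is Diophantine by the intersection lemma proved earlier, the task reduces to producing, for each fixed $p \in \calS$, a Diophantine definition over $R$ of the set
\[
J_p(R) := \{t \in R : v_p(t) \geq 0\}.
\]

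The plan for defining $J_p(R)$ follows Robinson's strategy of combining quadratic forms with the Hasse--Minkowski local-global principle. For each such $p$ I would select an auxiliary prime $q$, distinct from $p$ and from the other $p_i$, via Dirichlet's theorem on primes in arithmetic progressions so that certain prescribed quadratic-residue and Hilbert-symbol relations between $p$ and $q$ hold. Using $p$ and $q$, I would then write down a small system of polynomial equations in variables existentially quantified over $R$, with coefficients polynomial in $t$, which is a disguised form of the assertion that a certain ternary or quaternary quadratic form $F_p(X_1,\ldots, X_n; t)$ represents zero nontrivially over $\Q$. By Hasse--Minkowski this is equivalent to local solvability at every place; the form is designed so that solvability at every place outside $\{p, q, \infty\}$ is automatic, solvability at $q$ and $\infty$ is arranged by the choice of $q$ and appropriate sign conditions (using the Diophantine definition of the non-negative elements of $R$ cited above), while solvability at $p$ is equivalent to $v_p(t) \geq 0$. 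The underlying mechanism is that the norm form of an unramified quadratic extension $\Q_p(\sqrt{a})/\Q_p$ has image equal to the elements of $\Q_p^\times$ of even $p$-adic valuation, and after a suitable twist by $p$ this becomes precisely the integrality condition on $t$.

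To pass from a $\Q$-Diophantine definition to an $R$-Diophantine definition, any $R$-solution is \emph{a fortiori} a $\Q$-solution, so one direction is trivial. For the converse I would rescale any $\Q$-solution into an $R$-solution using that $R$ contains $\Z$ together with the inverses of all primes in $\calS$, which affords enough flexibility to absorb the only denominators that can possibly appear in a $\Q$-witness to the homogeneous quadratic form. Conjoining the resulting $\phi_{p_i}$ for $i = 1, \ldots, k$ then produces the desired Diophantine definition of $\Z$ over $R$.

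The main obstacle, and the genuine content of Robinson's contribution, is the delicate calibration of the auxiliary prime $q$ and the coefficients of $F_p$ so that the residual local condition produced by Hasse--Minkowski is exactly $v_p(t) \geq 0$ — with no spurious constraints surviving at $q$ or at the archimedean place, and with all other local obstructions trivialized. Managing this requires careful Hilbert-symbol bookkeeping, the Hasse norm theorem and quadratic reciprocity, together with the observation that over a \emph{small} ring only finitely many local obstructions need to be handled at all, which is why the same strategy is not currently known to yield a Diophantine definition of $\Z$ over $\Q$ itself.
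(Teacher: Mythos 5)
Your argument is essentially the same as the approach the paper attributes to Robinson's work, and the two key moves you isolate are exactly the ones the paper highlights: (1) since $\calS$ is finite, an element $t \in O_{\Q,\calS}$ lies in $\Z$ precisely when it is integral at the finitely many primes $p_1,\dots,p_k \in \calS$, and (2) Robinson's quadratic-form/Hasse--Minkowski machinery produces, for each prime $q$, an \emph{existential} definition over $\Q$ of the set $\mbox{Int}_q = \{x \in \Q : v_q(x) \geq 0\}$, and these definitions are then conjoined and transported to $R$. Your closing observation about why this technique is confined to \emph{small} rings --- only finitely many local conditions need to be imposed --- is also the right diagnosis.

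The one place where you and the paper part ways is the transfer from a $\Q$-existential definition to an $R$-existential definition. You appeal to homogeneity of the quadratic form and rescale a $\Q$-witness into an $R$-witness; the paper instead invokes the more robust ``simulate the denominators'' device: replace each variable ranging over $\Q$ by a quotient $a/b$ with $a,b$ ranging over $R$ and $b \neq 0$ (the latter being Diophantine over any small ring), then clear denominators. This works verbatim for \emph{any} existential $\Q$-formula, homogeneous or not, and is the standard way to pass definability down from $\Q = \Frac(R)$ to $R$. Your rescaling argument is fine when the relevant form really is homogeneous in the quantified variables, but your justification --- that $R$ ``contains the inverses of all primes in $\calS$'' --- is pointing at the wrong feature. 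The scaling that clears denominators is multiplication by an ordinary integer (the common denominator of the $\Q$-witness), which lands in $\Z \subset R$; having $p_i^{-1} \in R$ plays no role there. If you want the argument to go through without inspecting the exact shape of Robinson's formula, switch to the denominator-simulation trick, which is what the paper has in mind.
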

\begin{corollary}
HTP is unsolvable over all small subrings of $\Q$.
\end{corollary}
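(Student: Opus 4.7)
The plan is to reduce the undecidability of HTP over $\Z$ (the MDRP theorem) to HTP over an arbitrary small subring $R=O_{\Q,\calS}$, using the Diophantine definition of $\Z$ over $R$ provided by the Theorem of Julia Robinson. This is a direct analogue of the lemma that says a Diophantine definition of $\Z$ over $\Q$ would suffice to push undecidability down to $\Q$; here the target ring is simply $R$ instead of $\Q$.

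First I would set up the reduction. Let $p(T,\bar X)$ be the Diophantine definition of $\Z$ over $R$ guaranteed by the theorem, so that for $t\in R$ we have $t\in\Z$ if and only if $\exists\bar x\in R^{k}:p(t,\bar x)=0$. Given an arbitrary polynomial $h(T_{1},\dots,T_{l})$ with coefficients in $\Z$, form the finite system
\[
\left\{
\begin{array}{c}
h(T_{1},\dots,T_{l})=0\\
p(T_{1},\bar X_{1})=0\\
\vdots\\
p(T_{l},\bar X_{l})=0
\end{array}
\right.
\]
I would then argue, as in the proof for the $\Q$ case, that this system has a solution with $T_{i},\bar X_{i}\in R$ if and only if $h$ has a solution with $T_{i}\in\Z$: the auxiliary equations force each $T_{i}$ to be an integer, and any integer solution of $h$ trivially lifts to an $R$-solution of the full system because $\Z\subset R$.

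Next I would collapse the system into a single equation. Since the fraction field of $R$ is $\Q$, which is not algebraically closed, the earlier Lemma on replacing finitely many equations by one over an arbitrary integral domain applies, and produces one polynomial $H\in R[T_{1},\dots,T_{l},\bar X_{1},\dots,\bar X_{l}]$ whose $R$-zero set equals the solution set of the system. Moreover, by the remark following that lemma (and the fact that $R$ is a recursive ring), the passage from $h$ and $p$ to $H$ is effective. Therefore, if HTP over $R$ were decidable, we could effectively decide whether $H=0$ has an $R$-solution, hence whether $h=0$ has a $\Z$-solution, contradicting the MDRP theorem.

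The step that does any actual work is of course the Theorem of Julia Robinson that supplies $p(T,\bar X)$; granted that, the corollary is purely formal. The only small nuisance to watch is to verify carefully that one may indeed merge finitely many equations into one over the ring $R$ itself (not merely over its fraction field), so that the resulting equation is a legitimate instance of HTP over $R$. Once this bookkeeping is in place, the conclusion follows at once.
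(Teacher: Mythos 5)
Your proof is correct and follows exactly the route the paper intends: apply the same reduction used for the lemma about $\Q$ (a Diophantine definition of $\Z$ over a ring implies undecidability of HTP over that ring), with $R=O_{\Q,\calS}$ in place of $\Q$, using Julia Robinson's definability result and the ``one = finitely many'' property for small rings already recorded in Proposition~\ref{prop:dioph}. The paper leaves this corollary without written proof because it is precisely this formal reduction, which you have spelled out accurately.
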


Over large rings the questions turned out to be far more difficult.

\section{A Different Model}
\subsection{Existential Model of $\Z$ over a Very Large Subring}
In 2003 Poonen in \cite{Po} proved the first result on Diophantine undecidability (unsolvability of HTP) over a big subring of $\Q$.
\begin{theorem}%
There exist recursive sets of  primes ${\mathcal T}_1$ and ${\mathcal T}_2$, both of natural density zero
and with an empty intersection, such that for any set ${\mathcal S}$ of   primes containing ${\mathcal
T}_1$ and avoiding  ${\mathcal T}_2$, the following hold:%
\begin{itemize} %
\item $\Z$ has a  Diophantine model over $O_{\Q,\calS}$.%
\item Hilbert's Tenth Problem is undecidable over $O_{\Q,\calS}$.
\end{itemize}
\end{theorem}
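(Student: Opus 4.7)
The plan is to revive the elliptic curve approach of Section~\ref{subsec:oldplan} and repair it by a careful choice of $\calS$. Fix an elliptic curve $E/\Q$ of rank one with a generator $P$ of infinite order, and (by a quadratic twist) arrange $E(\Q)$ torsion-free. Define $\phi\colon \Z\to \Q^2$ by $\phi(n) := [n]P$ for $n\neq 0$, handling $n=0$ with an extra flag coordinate. By the preceding proposition characterizing Diophantine models in terms of the graphs of addition and multiplication, it suffices to show that the $\phi$-images of these two graphs are Diophantine over $O_{\Q,\calS}$. Addition is immediate: the group law on $E$ is given by explicit rational functions, so $\{(\phi(n),\phi(m),\phi(n+m))\}$ is cut out by polynomial equations in the coordinates and is Diophantine over $\Q$, hence over every $O_{\Q,\calS}$.

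The substantive step is multiplication. I would exploit the elliptic divisibility sequence associated to $P$: writing $x_n = a_n/d_n^2$ in lowest terms, the denominators $d_n$ satisfy the strong divisibility property $d_m\mid d_n \Leftrightarrow m\mid n$, and for primes $p$ of good reduction $\ord_p(d_n)>0$ exactly when the order of $P\bmod p$ divides $n$. This supplies a Diophantine handle on multiplicative relations. To express ``$(\phi(n),\phi(m),\phi(nm))$ lies on the graph of multiplication'' without naming the integer $n$, I would replace $n$ by a point $Q\in E(O_{\Q,\calS})$ and impose polynomial conditions forcing $Q=[n]P$ for some unseen integer $n$, together with analogous conditions on the point standing for $[nm]P$ expressed in terms of the divisibility data coming from $P$ and $[m]P$. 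To force $Q$ to be an integer multiple of $P$ rather than a spurious rational point, I would use auxiliary norm equations from a suitable quadratic extension $\Q(\sqrt d)/\Q$: at primes inert in this extension, the norm form rules out unwanted denominators and pins down the $p$-adic valuations of $Q$'s coordinates.

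The two recursive density-zero sets $\calT_1$ and $\calT_2$ are then dictated by the encoding. $\calT_1$ contains the primes of bad reduction of $E$, the primes dividing the coordinates of $P$, and the primes ramified in the auxiliary quadratic extension; $\calT_2$ consists of those primes at which the norm form acquires unwanted local solutions that would allow $Q$ to escape $\Z\cdot P$. Both sets are recursive and have natural density zero: prime divisors of an elliptic divisibility sequence form a set of density zero, and the Chebotarev density theorem controls the splitting pattern in the quadratic extension. For any $\calS$ with $\calT_1\subseteq \calS$ and $\calS\cap\calT_2=\emptyset$, the Diophantine definitions remain valid over $O_{\Q,\calS}$, so $\phi$ is a Diophantine model of $\Z$ there, and undecidability of HTP follows from the other preceding proposition. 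The real obstacle is the multiplication step: simultaneously expressing the necessary $p$-adic valuation constraints Diophantinely at the finitely many awkward primes while keeping $\calT_1$ and $\calT_2$ recursive and of density zero is the technical heart of the argument.
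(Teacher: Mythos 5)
Your proposal attempts to revive the old index plan ($\phi(n)=[n]P$) and make multiplication Diophantine via the divisibility structure of the denominators, but that is not how Poonen proves this theorem, and the route you sketch has a structural obstruction. Poonen does not encode $n$ by the point $[n]P$ at all: he models $\Z_{>0}$ \emph{by approximation}, choosing $\calS$ so that (up to finitely many exceptions) the only multiples of the generator with $\calS$-integral coordinates are a thin sequence $[\pm\ell_j]P=(x_{\ell_j},\pm y_{\ell_j})$ arranged to satisfy $|y_{\ell_j}-j|<10^{-j}$, and then sends $j\mapsto y_{\ell_j}$. Addition and squaring are then recovered from the crude real inequalities $|y_{\ell_i}+y_{\ell_j}-y_{\ell_k}|<1/3$ and $|y_{\ell_i}^2-y_{\ell_k}|<2/5$, which are Diophantine via Lagrange's four squares. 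The group law never enters, and in particular there is no need to express multiplication of indices algebraically, which is exactly the step your proposal cannot close.

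The divisibility route you describe is the content of the later, separate result on index multiplication (Theorem~\ref{thm:main}), and it is incompatible with the model you want here. To make ``$\denom(x_m)\mid\denom(x_n)\Leftrightarrow m\mid n$'' usable Diophantinely one must \emph{refrain} from inverting the indicator primes $p_{\ell^i}$; but then \emph{no} multiple of $P$ has coordinates in the ring, so the map $n\mapsto[n]P$ does not land in $O_{\Q,\calS}^2$, and one is forced to represent points by quadruples of numerators and denominators, yielding only a \emph{class} Diophantine model over a specific density-one ring, not the Diophantine model over the whole $\calT_1,\calT_2$ family asserted here. Two further problems: (i) your ``norm equations from a quadratic extension to rule out spurious rational points'' solves a non-problem, since with $E(\Q)\cong\Z$ every rational point already is an integer multiple of $P$; the difficulty was never spurious points but the absence of an algebraic relation among coordinates detecting $k=mn$. (ii) Your density claim is false: the set of \emph{all} prime divisors of an elliptic divisibility sequence has density one, not zero (almost every prime of good reduction divides some term). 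What has density zero is the set of indicator primes $\{p_\ell\}$, and proving that is the technically hard core of Poonen's argument (via results of Serre); your proposal does not address it. As you yourself note, the multiplication step is left open, and that is precisely the gap the approximation model was invented to circumvent.
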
%

Poonen used elliptic curves to prove his result but the model he constructed was very different from the one envisioned by the old elliptic curve plan we described earlier.  Poonen modeled integers \emph{by approximation}.  The construction of the model does start with an  elliptic curve of rank one
\begin{equation}
\label{eq:elliptic}
E: y^2=x^3+ax+b
\end{equation}
selected so that for a set of primes $\calS$, except possibly for finitely many points,  the only multiples of a generator $P$ that have their affine coordinates in the ring $O_{\Q,\calS}$ are in the sequence $[\pm \ell_i]P=(x_{\ell_i},\pm y_{\ell_i})$
 with \emph{$|y_{\ell_j} - j| < 10^{-j}$}. We remind the reader that we know how to define positive numbers using a variation on  Lagrange's theme (Proposition \ref{prop:dioph}) and  how to get rid of a finite set of undesirable values such as points of finite order (just say ``$\not =$'' as in Proposition \ref{prop:dioph} again).
We claim that $\phi: j \longrightarrow y_{\ell_j}$ is a Diophantine model of $\Z_{>0}$.  In other words we claim that $\phi$ is an  injection and the following sets are Diophantine:
\[%
D_+=\{(y_{\ell_i}, y_{\ell_j}, y_{\ell_k}) \in D^3: k = i+ j, k,i,j \in \Z_{>0}\}%
\]%
and%
\[%
D_2= \{(y_{\ell_i}, y_{\ell_k}) \in D^2: k = i^2, i \in \Z_{>0}\}.%
\]%
(Note that if $D_+$ and $D_2$ are Diophantine, then $D_{\times}=\{(y_{\ell_i}, y_{\ell_j}, y_{\ell_k}) \in D^3: k = i j, k,i,j \in \Z_{>0}\}$ is also Diophantine since $xy =\frac{1}{2}((x+y)^2-x^2-y^2$).)
It is easy to show that
\[%
k=i+j \Leftrightarrow |y_{\ell_i}+y_{\ell_j}-y_{\ell_k}| < 1/3.%
\]%
and with the help of Lagrange this makes $D_+$ Diophantine.  Similarly we have that
\[%
k=i^2 \Leftrightarrow  |y_{\ell_i}^2-y_{\ell_k}| < 2/5,%
\]%
implying that $D_2$ is Diophantine.

To restrict the number of solutions to  the elliptic curve equation, Poonen's construction relied to a large extent on the fact that the denominators of the coordinates of  points on an elliptic curve which are multiples of a single point form a divisibility sequence: an integer sequence  $\{a_n\}$ is called a divisibility sequence if $n|m$ implies $a_n |a_m$ (see Chapter 4 and Chapter 10 of \cite{Everest3} for a discussion of such sequences and see the discussion of the formal group of an elliptic curve in Chapter 4 of \cite{Sil1} for an explanation of why the denominators form a divisibility sequence).   We now take a closer look at these denominators.
\subsection{The Denominators of Points on an Elliptic Curve}
Let $E$ be an elliptic curve as in \eqref{eq:elliptic}, fix a point of infinite order $P_1=(x_1,y_1)$ on the curve and let $P_n=(x_n,y_n)=[n]P_1$ be the $n$-th multiple of $P_1$ for a non-zero integer $n$.  In the notation above, using properties of elliptic curves  one can show with various degrees of difficulty that the following statements are true:
\begin{itemize}
\item The same primes divide the (reduced) denominators of $x_n$ and $y_n$, and therefore we can speak about primes dividing the ``denominator'' of $P_n$.  (This follows from looking at the elliptic curve equation we use.)
\item For all $n$ sufficiently large in absolute value, it is the case that $P_n$ has a \emph{primitive divisor}, i.e. a prime dividing the ``denominator'' of $P_n$ but not the ``denominator'' of any $P_m$  with $|m| < |n|$.  Denote the largest primitive divisor of $P_n$ by $p_n$.  We call $p_n$ an \emph{an indicator prime}.  (This can be deduced from the rate of growth of denominators relative to the rate of growth of the exponents of the primes dividing the denominators.)
   \item  $(\denom(x_n),\denom(x_m))=\denom(x_{(m,n)})$ for sufficiently large $|m|$ and $|n|$ and therefore $\denom(x_m)$ divides $\denom(x_n)$ if and only if  $m$ divides $n$ (again for sufficiently large $|m|$ and $|n|$).  (This property uses the existence of primitive divisors as well as the fact that the denominators form a divisibility sequence.)
\item Poonen showed that the set  $\{p_{\ell}: \ell \mbox{ is a prime numer }\}$ has natural density equal to 0.  (This density result was proved using some results of Serre and was probably the most technically challenging part of Poonen's paper.)
\end{itemize}
          Now it is not hard to see that if $p_{\ell}$ is not allowed in the denominators of the elements of our big ring, then the coordinates of the point $[\ell]P_1=(x_{\ell},y_{\ell})$ will not be  solutions to the elliptic curve curve equation in our ring.  Further, we will also exclude \emph{all the multiples} of this point but will not affect the points whose indices are prime to $\ell$.   This is the principal mechanism for controlling which solutions to the elliptic curve equation appear in our ring.

In a 2009 paper (see \cite{EE}) Eisentr\"ager and Everest extended Poonen's method to prove a theorem concerning sets of complementary primes.  If $\calP$ is the set of all primes of $\Q$, then two subsets $\calT, \calS \subset \calP$ are exactly complementary if $\calS\cup \calT = \calP$ and $\calS \cap \calT=\emptyset$.   We state this theorem below.
\begin{theorem}
There are exactly complementary recursive sets $\calS$, $\calT \subset \calP$ such that
Hilbert's Tenth Problem is undecidable for both rings $O_{\Q,\calS}$ and $O_{\Q,\calT}$ .
\end{theorem}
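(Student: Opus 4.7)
The plan is to run Poonen's elliptic-curve construction simultaneously on two rank-one elliptic curves $E_1, E_2/\Q$, and arrange the prime assignments so that one curve supplies a Diophantine model of $\Z$ for $O_{\Q,\calS}$ while the other does the same for $O_{\Q,\calT}$.

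First, for $i=1,2$, I would fix a rank-one elliptic curve $E_i$ with generator $P^{(i)}$ of infinite order and apply Poonen's theorem. This produces a recursive sequence of primes $\{\ell_j^{(i)}\}$ with $|y^{(i)}_{\ell_j} - j| < 10^{-j}$, together with two recursive, density-zero, disjoint sets $\calT_1^{(i)}, \calT_2^{(i)} \subset \calP$ such that for \emph{any} $\calR \subset \calP$ containing $\calT_1^{(i)}$ and avoiding $\calT_2^{(i)}$, the map $j \mapsto y^{(i)}_{\ell_j^{(i)}}$ is a Diophantine model of $\Z_{>0}$ over $O_{\Q,\calR}$.

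I would then define
\[
\calS \;=\; \calT_1^{(1)} \cup \calT_2^{(2)} \cup \calU, \qquad \calT \;=\; \calP \setminus \calS,
\]
where $\calU \subset \calP \setminus \bigl(\calT_1^{(1)} \cup \calT_2^{(1)} \cup \calT_1^{(2)} \cup \calT_2^{(2)}\bigr)$ is any recursive subset (for instance, half of the ``free'' primes in some effective enumeration). By construction $\calT_1^{(1)} \subseteq \calS$ and $\calS \cap \calT_2^{(1)} = \emptyset$, so $E_1$ yields a Diophantine model of $\Z$ over $O_{\Q,\calS}$; and $\calT_1^{(2)} \subseteq \calT$ with $\calT \cap \calT_2^{(2)} = \emptyset$, so $E_2$ yields one over $O_{\Q,\calT}$. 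Undecidability of HTP over both rings then follows exactly as in Poonen's theorem.

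The main obstacle is showing that this definition is legitimate, i.e.\ that the four indicator sets impose \emph{consistent} demands: $\calT_1^{(1)} \cap \calT_2^{(1)} = \emptyset$ and $\calT_1^{(2)} \cap \calT_2^{(2)} = \emptyset$ come for free from Poonen, but the cross-intersections $\calT_1^{(1)} \cap \calT_2^{(2)}$, $\calT_2^{(1)} \cap \calT_1^{(2)}$, $\calT_1^{(1)} \cap \calT_1^{(2)}$, and $\calT_2^{(1)} \cap \calT_2^{(2)}$ must all be empty. I would handle this by choosing $E_2$ after $E_1$ is fixed and performing the recursive selection of $\{\ell_j^{(2)}\}$ relative to the sparse, density-zero ``forbidden'' set $\calT_1^{(1)} \cup \calT_2^{(1)}$: at stage $j$, pick the next prime $\ell$ satisfying $|y^{(2)}_\ell - j|<10^{-j}$ whose primitive divisor $p^{(2)}_\ell$ avoids $\calT_1^{(1)} \cup \calT_2^{(1)}$, and correspondingly trim the exclusion set $\calT_2^{(2)}$ of any prime already committed to $\calS$ by the $E_1$ construction. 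The analytic input powering Poonen's density-zero bound (equidistribution of $y^{(2)}_\ell \bmod 1$ along primes $\ell$ together with Serre's results on Galois images of elliptic curves) guarantees infinitely many valid choices at each stage, so this simultaneous diagonalization succeeds while preserving recursiveness of all the sets involved and the density-zero property of each $\calT_k^{(i)}$.
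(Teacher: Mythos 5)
The paper itself does not prove this theorem; it is stated with a citation to Eisentr\"ager and Everest \cite{EE}, whose argument (``descent on elliptic curves'') uses a \emph{single} rank-one curve equipped with a rational $2$-isogeny and exploits the descent map to split the relevant primes between the two rings. Your proposal to run Poonen's construction on two independent curves is therefore a genuinely different route, and the question is whether it can be made to close.

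First, a bookkeeping issue: of the four ``cross-intersections'' you list, two are irrelevant. Since $\calT_1^{(1)}, \calT_2^{(2)} \subseteq \calS$ and $\calT_2^{(1)}, \calT_1^{(2)} \subseteq \calT$, the pairs $\calT_1^{(1)}\cap\calT_2^{(2)}$ and $\calT_2^{(1)}\cap\calT_1^{(2)}$ live on the same side of the partition and are perfectly allowed to overlap. The genuine new requirements, beyond the two Poonen gives for free, are exactly $\calT_1^{(1)}\cap\calT_1^{(2)}=\emptyset$ and $\calT_2^{(1)}\cap\calT_2^{(2)}=\emptyset$.

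The first of these is plausibly achievable by the diagonalization you sketch: for prime $\ell$ and a curve with $V_1=1$, every prime dividing $V_\ell$ is primitive for $\ell$, so a fixed prime $p$ divides $V_\ell$ for at most one prime $\ell$; hence at each stage, among the infinitely many primes $\ell$ with $|y_\ell^{(2)}-j|<10^{-j}$, only finitely many are disqualified by the finitely many denominator primes committed so far.

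The serious gap is the second requirement. The set $\calT_2^{(i)}$ consists of indicator primes $p_n^{(i)}$ at the \emph{unchosen} indices, and Poonen needs essentially all of them: to kill $[\ell]P^{(i)}$ for a prime $\ell$ outside the chosen sequence, some prime of $V_\ell^{(i)}$ must be withheld from the ring, and if $V_\ell^{(i)}$ has a single prime factor you have no choice at all. You control which indices are chosen, but not what the indicator primes at unchosen indices \emph{are} -- those are dictated by the arithmetic of the curve. If some prime $p$ happens to be the indicator prime $p_n^{(1)}$ for an unchosen $n$ on $E_1$ and also $p_m^{(2)}$ for an unchosen $m$ on $E_2$, then $p$ is forced into both $\calT$ (by the $E_1$ exclusion) and $\calS$ (by the $E_2$ exclusion), a contradiction. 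Your proposal to ``trim the exclusion set $\calT_2^{(2)}$ of any prime already committed to $\calS$'' is not a legitimate move: removing a needed prime from $\calT_2^{(2)}$ reintroduces unwanted multiples of $P^{(2)}$ into $O_{\Q,\calT}$ and breaks the approximation model. You have no mechanism for ruling out such coincidences, and nothing in Serre's equidistribution input addresses this: the issue is not density of the $\calT_2$'s but their \emph{set-theoretic} disjointness, which equidistribution does not control. This is precisely the obstacle that the Eisentr\"ager--Everest isogeny descent is designed to circumvent, by working on one curve and using the $\phi$/$\hat\phi$ structure to produce a natural, consistent partition of the relevant primes rather than hoping two unrelated curves avoid each other.
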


\section{Old Dreams Die Hard}
So what about the old plan for modeling $\Z$ using  indices of elliptic curve points?  As it turns out the old plan can be resurrected but not exactly as intended.  We remind the reader that the stumbling block for that plan was showing that the set $\{(x_n,y_n), (x_m,y_m), (x_{nm},y_{nm})\}$ was Diophantine over $\Q$, where  $(x_n,y_n)$ are the affine coordinates of an $n$-th multiple of a generator of an elliptic curve of rank one over $\Q$.  (See Section \ref{subsec:oldplan}.)    We manage to make this set Diophantine but only over some very big rings.  To do this we modify Poonen's idea by not inverting \emph{any}  indicator primes $p_{\ell}$.  In this case no point of the elliptic curve has coordinates in our ring and we have to represent a point by a quadruple of numerators and denominators.  While this is more awkward, we can derive some benefits.
\subsection{Defining Multiplication of Indices}
The main result pertaining to index multiplication is stated below.
\begin{theorem}%
\label{thm:main}%
Let $E$ be an elliptic curve defined and of rank one over $\Q$.  Let $P$ be a
generator of $E(\Q)$ modulo the torsion subgroup, and fix an affine (Weierstrass) equation for $E$ of the form $y^2=x^3+ ax +b$, with $a, b \in \Z$.  If $(x_n,y_n)$ are the coordinates of $[n]P$ with  $n \not = 0$ derived from this (Weierstrass) equation, then there exists a set of primes $\calW$ of
natural density one,  and a positive integer $m_0$ such that the following set $\Pi \subset O_{\Q,\calW}^{12}$ is Diophantine over $O_{\Q,\calW}$.
\[%
\begin{array}{c}
(U_1, U_2, U_3, X_1,X_2,X_3, V_1,V_2, V_3, Y_1, Y_2, Y_3)\in \Pi \Leftrightarrow\\
\exists \mbox{ unique } k_1, k_2, k_3 \in \Z_{\not= 0}   \mbox{ such that } \\
\left(\frac{U_i}{V_i}, \frac{X_i}{Y_i}\right )=(x_{m_0k_i}, y_{m_0k_i}), \mbox{ for } i=1,2,3, \mbox{ and } k_3=k_1k_2.
\end{array}
\]%
\end{theorem}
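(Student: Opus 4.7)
The plan is to build on the ideas in Poonen's construction but to modify it crucially: rather than inverting some set of indicator primes, I would invert \emph{none} of them, so that the full divisibility structure of the elliptic divisibility sequence $\bigl(\denom(x_n)\bigr)_{n\neq 0}$ remains detectable in the ring. Take $\calW$ to be the complement in the set of all rational primes of the set of indicator primes $\{p_\ell:\ell\text{ prime}\}$ together with the finitely many primes of bad reduction of $E$; by Poonen's density estimate this is a recursive set of natural density one. Fix $m_0\in\Z_{>0}$ large enough that, for every nonzero $n$ with $m_0\mid n$, the point $[n]P$ has a primitive prime divisor, the strong-divisibility identity $\bigl(\denom(x_m),\denom(x_n)\bigr)=\denom(x_{(m,n)})$ holds, and all nontrivial torsion phenomena are avoided.

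Because no indicator prime lies in $\calW$, the affine coordinates of $[n]P$ for $n\neq 0$ essentially never lie in $O_{\Q,\calW}$. I would therefore represent each relevant point by a quadruple $(U,V,X,Y)\in O_{\Q,\calW}^4$ encoding $U/V=x_n$ and $X/Y=y_n$, subject to $VY\neq 0$ (Diophantine by Proposition~\ref{prop:dioph}) and the cleared Weierstrass relation $X^2V^3-Y^2(U^3+aUV^2+bV^3)=0$. Since every prime that can appear in the denominator of a multiple of $P$ is excluded from $\calW$, any such valid quadruple forces the underlying index to be a nonzero multiple of $m_0$, so the set of valid quadruples is Diophantine. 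The Diophantineness of the index-addition relation is then immediate from the rational group law on $E$ applied to these quadruples, after clearing denominators.

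The technical heart of the proof is encoding index \emph{multiplication}. Writing $P_{k_i}=[m_0k_i]P$ represented by $(U_i,V_i,X_i,Y_i)$, the key observation is that $k_3=k_1k_2$ is equivalent to the algebraic identity $P_{k_3}=[k_1]P_{k_2}=[k_2]P_{k_1}$. Since $E(\Q)/\mathrm{tors}$ is cyclic generated by $P$, the containment $P_{k_3}\in\langle P_{k_2}\rangle$ is equivalent to the $\Z$-divisibility $\denom(x_{m_0k_2})\mid\denom(x_{m_0k_3})$, and---because these denominators are products of primes \emph{outside} $\calW$---that divisibility can be translated into a Diophantine statement over $O_{\Q,\calW}$ using the $V_i$'s (after an appropriate normalization to absorb $\calW$-unit ambiguities). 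Imposing the symmetric containment coming from $P_{k_3}\in\langle P_{k_1}\rangle$ together with the strong-divisibility identity and a polynomial relation arising from $[k_1]P_{k_2}=[k_2]P_{k_1}$ should pin $k_3$ down from the superset of common multiples of $k_1$ and $k_2$ to the precise value $k_1k_2$.

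The principal obstacle is exactly this last step. Divisibility information by itself yields only $\mathrm{lcm}(k_1,k_2)\mid k_3$, so extracting the precise product requires a genuinely new input from the group law on $E$. The challenge is twofold: first, converting divisibility of rational integers into a bona fide Diophantine condition over $O_{\Q,\calW}$---this is where the density-one choice of $\calW$, avoiding every indicator prime, does decisive work---and second, identifying a group-law identity that rules out spurious common multiples $k_3=c\cdot\mathrm{lcm}(k_1,k_2)$ with $c>1$ while keeping the entire condition existentially definable.
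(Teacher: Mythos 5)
Your proposal identifies the correct objects (exclude indicator primes, represent points by quadruples, use denominator divisibility), but it does not solve the central problem and in fact misdiagnoses where the solution comes from. The paper's key idea is a \emph{two-sided} divisibility condition on the $V$'s, not a group-law identity. Writing $V_n$ for the (reduced) denominator of $x_n$, the paper arranges, after coprimality normalizations such as $(V_m,V_n)=1$, $(m,V_n)=1$, $(n,V_m)=1$, that in $O_{\Q,\calW}$ one has
\[
V_mV_n \mid V_k \ \land\ V_k \mid V_mV_n \ \Longrightarrow\ |k|=|mn|.
\]
The direction $V_mV_n\mid V_k$ together with $\denom(x_m)\mid\denom(x_n)\Leftrightarrow m\mid n$ gives $mn\mid k$ (this is the lcm bound you mention); the converse divisibility $V_k\mid V_mV_n$ is what you are missing. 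It rules out $|k|>|mn|$ because any such $k$ has a primitive prime divisor of $V_k$ that cannot divide $V_mV_n$, and in the carefully chosen big ring that prime is \emph{not} a unit, so the divisibility fails. No extra group-law input is needed. Your closing paragraph explicitly says you cannot rule out $k_3=c\cdot\mathrm{lcm}(k_1,k_2)$ with $c>1$ and speculates that some commutativity identity $[k_1]P_{k_2}=[k_2]P_{k_1}$ will help; but that identity is a tautology in an abelian group and carries no information, so this route is a dead end. (Once coprime-index multiplication is captured, general multiplication follows from squaring and the polarization identity, which you already note.)

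A second, smaller issue: your choice of $\calW$ excludes only indicator primes $p_\ell$ for $\ell$ \emph{prime}, whereas the paper excludes primitive divisors $p_{\ell^i}$ for all prime \emph{powers} $\ell^i$. Excluding only prime-index primitive divisors does not guarantee $\denom(x_m)\mid\denom(x_n)\Leftrightarrow m\mid n$ in the quotient ring: for instance $V_{\ell^2}$ could become a unit times $V_\ell$ after inverting all non-excluded primes, which would make $\ell^2$ indistinguishable from $\ell$ at the level of denominators and break the whole encoding. The prime-power refinement is necessary, and one must also check that this larger excluded set still has density zero.
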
%
(See \cite{Sh38}.)

We outline the proof under some simplifying assumptions. Let $E$ as before be an elliptic curve of rank one defined over $\Q$ and let $E(\Q)$ be the set of all the points of the curve with rational coordinates together with $O$ -- the point at infinity.  For a generator $P$ of $E(\Q)$,  for $n \in \Z_{\not = 0}$, we let $[n]P = (x_n,y_n)=(\frac{U_n}{V_n},\frac{X_n}{Y_n}) \in \Q^2$, where $V_n>0, Y_n >0, (U_n,V_n) =1, (X_n,Y_n)=1$, $U_n, V_n, X_n, Y_n \in \Z$.
(We are still using a (Weierstrass) equation of our elliptic curve of the form $y^2=x^3+ax+b$.) We assume that every non-trivial multiple of the generator $P$ has an odd  primitive divisor, i.e. for every $n \in \Z_{\not=0,\pm1}$ there exists a prime $p \not=2$ such that $p | V_n$ but $p \not | V_m$ for any $m$ with $|m|<|n|$.  We also assume that coordinates of $P$ are integers. A point $P_n$ will now be represented by a quadruple $(A_n, B_n, C_n, D_n)$ of elements in our ring and this representation will not be unique because while it is possible to require that a numerator and the denominator are relatively prime in our big ring, big rings in general have infinitely many units and this will stand in a way of a unique representation of a point. In our big ring we do not invert the set
\[
\calV=\{\mbox{The largest odd primitive prime factor } p_{\ell^i} \mbox{ of } V_{\ell^i}\},
\]
where $\ell$ runs through all the prime numbers and $i \in \Z_{>0}$.  Not inverting these primes will ensure that $\denom(x_m)$ divides $\denom(x_n)$ if and only if $m$ divides $n$.  We invert \emph{all the other primes}. In the resulting big ring we now have that
\[
(V_m,V_n)=1, (m, V_n), (n,V_m) = 1, V_mV_n | V_k \land V_k |V_mV_n \Longrightarrow |k|=|mn|.
\]
More work is required to get rid of the absolute values and to remove the relative primeness condition, but that task can be accomplished with standard methods.
Given that each point of the elliptic curve has infinitely many quadruples representing it we cannot construct a Diophantine model of $\Z$  (or show that $\Z$ is existentially definably interpretable in $O_{\Q,\calW}$) as above.  What we can do is construct a class Diophantine model of $\Z$ defined below.  (The corresponding model-theoretic notion is existential interpretability.  Again see Chapter 1, Section 3 of \cite{Marker}.)
\begin{definition}[Class Diophantine Model]%
Let $R$ be a countable recursive ring, let $D \subset R^k$, $k \in \Z_{>0}$ be a Diophantine subset, and let $\approx$
be a (Diophantine) equivalence relation on $D$, i.e assume that the set $\{(\bar x,\bar y): \bar x,\bar y \in D, \bar x
\approx \bar y\}$ is  a Diophantine subset of $R^{2k}$. Let $D=\bigcup_{i \in \Z}D_i$, where $D_i$ is an
equivalence class of $\approx$, and let $\phi: \Z \longrightarrow \{D_i, i \in \Z\}$ be defined by
$\phi(i) = D_i$. Finally assume that the sets
\[%
Plus=\{(\bar x,\bar y,\bar z): \bar x \in D_i, \bar y\in D_j, \bar z \in D_{i+j}\}
\]%
and
\[%
Times =\{(\bar x,\bar y,\bar z): \bar x \in D_i, \bar y\in D_j, \bar z \in D_{ij}\}
\]%
 are Diophantine over $R$.
In this case we say that $R$ has a \emph{class} Diophantine model of $\Z$.
\end{definition}%
It is not hard to show that the rings with class Diophantine models of $\Z$ have undecidable Diophantine sets, just as the rings with Diophantine models of $\Z$. As as result of our ability to give a Diophantine definition of multiplication of indices we can construct a class Diophantine model of $\Z$ over a class of big subrings of $\Q$ not covered by the results of Poonen or Eisentr\"{a}ger and Everest.

\begin{corollary}%
\label{cor:main}
In the notation above, for $n \not = 0$ let $\phi(n)=[ (U_{m_0n},X_{m_0n}, V_{m_0n}, Y_{m_0n})]$, the equivalence class of $(U_{m_0n},X_{m_0n}, V_{m_0n}, Y_{m_0n})$ under the equivalence relation described below,   where $U_{m_0n},X_{m_0n}$, $V_{m_0n}, Y_{m_0n} \in O_{\Q,\calS}, V_{m_0n}Y_{m_0n} \not = 0$, and
$\displaystyle (x_{m_0n},y_{m_0n})=\left(\frac{U_{m_0n}}{V_{m_0n}},\frac{X_{m_0n}}{Y_{m_0n}} \right)$.  Let $\phi(0)=\{(0,0,0,0)\}$.  Then $\phi$  is a class Diophantine model of $\Z$. (Here  if  $V\hat V\hat Y Y\not=0$ we set $(U,X,V,Y) \approx (\hat{ U},\hat X, \hat V, \hat Y)$ if and only if $\displaystyle \frac{\hat{U}}{\hat{V}}=\frac{U}{V}$ and $\displaystyle \frac{{\hat X}}{{\hat Y}}=\frac{X}{Y}$.)
\end{corollary}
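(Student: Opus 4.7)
The plan is to verify each clause in the definition of a class Diophantine model of $\Z$, leaning on Theorem \ref{thm:main} for the multiplicative structure, on the rational-function form of the elliptic curve group law for the additive structure, and on Proposition \ref{prop:dioph} for nonvanishing-denominator bookkeeping.

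First I would build the Diophantine domain $D$. Let $D_{\neq 0}$ be the projection of $\Pi$ onto its first four coordinates $(U_1,X_1,V_1,Y_1)$: by Theorem \ref{thm:main}, taking the witness $k_2 = 1$ (so $k_3 = k_1$ automatically), this projection is exactly the set of quadruples $(U,X,V,Y) \in O_{\Q,\calW}^4$ with $VY \neq 0$ and $(U/V,X/Y) = [m_0 k]P$ for some $k \in \Z_{\neq 0}$, and hence is Diophantine. Then $D := D_{\neq 0} \cup \{(0,0,0,0)\}$ is Diophantine, since adjoining a point preserves Diophantine-ness. The equivalence $\approx$ on $D$ is cut out by the polynomial relations $U\hat V - \hat U V = 0$ and $X\hat Y - \hat X Y = 0$, combined with the disjunction ``both quadruples lie in $D_{\neq 0}$ or both equal $(0,0,0,0)$'', again Diophantine. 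The map $\phi$ is a well-defined injection from $\Z$ into $D/\!\approx$: the uniqueness clause of Theorem \ref{thm:main} guarantees that distinct $k \in \Z_{\neq 0}$ give distinct points $[m_0 k]P$, hence distinct $\approx$-classes, and $\phi(0) = \{(0,0,0,0)\}$ is disjoint from each $\phi(n)$ with $n \neq 0$.

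For $\text{Times}$, after a trivial reordering of coordinates, $\Pi$ \emph{is} the restriction of $\text{Times}$ to the subset where all three quadruples lie in $D_{\neq 0}$. The remaining cases, where at least one $n_i$ vanishes and hence forces at least one of $n_1, n_2$ to vanish, contribute finitely many explicitly Diophantine subsets; I would adjoin these to $\Pi$ using the product-of-equations trick for unions. For $\text{Plus}$, I would exploit the group law on $E$ directly: the identity $[m_0 n_1]P + [m_0 n_2]P = [m_0(n_1 + n_2)]P$, after substituting $x_i = U_i/V_i$, $y_i = X_i/Y_i$ into the secant-and-tangent formulas and clearing denominators, becomes a finite set of polynomial identities in the twelve variables $U_i, X_i, V_i, Y_i$. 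The degenerate subcases (some $n_i = 0$, or $n_1 + n_2 = 0$ so that $q_3 = (0,0,0,0)$, or $n_1 = n_2$ calling for the tangent formula) each yield an explicit Diophantine condition and are combined into a single Diophantine definition of $\text{Plus}$ via the same union-is-Diophantine device.

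The main obstacle is less any one deep step than careful bookkeeping, since Theorem \ref{thm:main} has already done the substantive work. The genuine subtlety is the non-uniqueness of the quadruple representation: because $O_{\Q,\calW}$ has infinitely many units, each point $[m_0 n]P$ is represented by infinitely many quadruples, which is precisely why one cannot hope for an ordinary Diophantine model and must instead build the equivalence $\approx$ into the model from the start. One has to be vigilant that the group-law identities respect $\approx$ (so that $\text{Plus}$ and $\text{Times}$ are genuinely relations on classes rather than on representatives), but this is automatic from the projective nature of the clearing-denominators step. Once $D$, $\approx$, $\text{Plus}$, and $\text{Times}$ are verified Diophantine, the corollary follows, and the resulting undecidable Diophantine sets over $O_{\Q,\calW}$ are obtained by the same argument already given for ordinary Diophantine models.
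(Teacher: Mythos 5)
Your proposal is correct, and since the paper gives no detailed proof of this corollary (it states that the result follows from Theorem \ref{thm:main} together with the earlier observation that the group law makes addition of indices Diophantine), you have in effect supplied exactly the verification the author leaves implicit: $D$ via projection of $\Pi$, $\approx$ by cross-multiplication, $\text{Times}$ directly from $\Pi$, and $\text{Plus}$ from the secant-tangent rational functions after clearing denominators, with degenerate cases ($n_i=0$, $n_1+n_2=0$, $n_1=n_2$) adjoined by the union-of-Diophantine-sets device. This is the same route the paper is gesturing at, so no substantive divergence to report.

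One small point worth flagging rather than a gap: the injectivity of $\phi$ ultimately rests not on the ``uniqueness'' clause of Theorem \ref{thm:main} alone but on $P$ being of infinite order modulo torsion, so that $[m_0k]P=[m_0k']P$ forces $k=k'$; the uniqueness in the theorem is a consequence of this, not a substitute for it. Also, your phrase ``projection onto the first four coordinates $(U_1,X_1,V_1,Y_1)$'' should read ``projection onto the four coordinates indexed by $1$,'' since $\Pi$ lists all the $U_i$ before the $X_i$, etc.; this is cosmetic and does not affect the argument.
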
%
We have now pretty much  covered the state of existential affairs and it is time to  return to the issue of lifting the ban on a few universal quantifiers.  One could interpret this as a sign of surrender in the face of the overwhelming enemy (i.e. HTP for $\Q$), but we prefer a more optimistic interpretation:  a gradual gathering of forces.

\section{Matters of the First Order or Back to the Future}
The result defining integers over $\Q$ using the "full force'' of the first-order language is pretty old and belongs to Julia Robinson.
\begin{theorem}[Julia Robinson]
$\Z$ is first-order definable over $\Q$. (See \cite{Rob1}.)
\end{theorem}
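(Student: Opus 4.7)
The strategy is the classical one via local--global principles for quadratic forms. Let $V_p = \{x \in \Q : v_p(x) \geq 0\}$ denote the localization of $\Z$ at the prime $p$, viewed inside $\Q$. Then $\Z = \bigcap_{p} V_p$, so it suffices to give a uniform first-order description of the family $\{V_p\}$: concretely, to produce a formula $\psi(a, b; t)$ in the language of rings such that the slices $\{t \in \Q : \psi(a, b; t)\}$, as $(a, b)$ ranges over pairs of rational parameters, each lie inside some $V_p$, each $V_p$ is so realized, and so that $\Z$ emerges as the intersection after a universal quantifier over $(a, b)$.

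To produce such a $\psi$ I would appeal to the Hasse--Minkowski theorem for ternary quadratic forms over $\Q$: such a form represents zero over $\Q$ if and only if it does so over $\R$ and over every $\Q_\ell$. I would take $\psi(a, b; t)$ to be an existential formula asserting that a suitably chosen quadratic form $Q_{a, b, t}(x_1, \ldots, x_r) = 0$ has a rational solution. The point is that local solvability is governed by Hilbert symbols $(a, b)_\ell$, and by choosing $a$ and $b$ cleverly one can arrange that the form is solvable in $\Q_\ell$ for all $\ell \neq p$ while its solvability at $p$ becomes equivalent to $v_p(t) \geq 0$. Hasse--Minkowski then translates global rational solvability into $v_p(t) \geq 0$, so that the slice $\{t : \psi(a, b; t)\}$ picks out $V_p$.

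With such $\psi$ in hand, the definition of $\Z$ takes the form
\[
t \in \Z \; \iff \; \forall a, b \in \Q : \; \chi(a, b) \to \psi(a, b; t),
\]
where $\chi(a, b)$ is an auxiliary (quantifier-free or existential) formula, also built from quadratic forms, selecting those $(a, b)$ for which the slice genuinely captures some $V_p$ rather than, say, all of $\Q$ vacuously. To guarantee that the intersection is really $\Z$ and not a strictly larger subring, one can bolt on an inductive clause demanding $\psi(a, b; 0)$ and $\forall s\,(\psi(a, b; s) \to \psi(a, b; s + 1))$: every slice that is inductive in this sense automatically contains $\Z_{\geq 0}$, while the range of admissible $(a, b)$ must be rich enough to exclude any $t \notin \Z_{\geq 0}$. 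Extracting $\Z$ from $\Z_{\geq 0}$ is then a trivial first-order step, e.g. $\Z = \Z_{\geq 0} - \Z_{\geq 0}$.

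The main obstacle is the Hilbert-symbol bookkeeping behind the choice of $Q_{a, b, t}$: one needs a uniform-in-$p$ recipe that, for each prime $p$, produces parameters $(a, b) \in \Q \times \Q$ localizing the ternary form precisely at $p$, and one must handle the small primes (especially $p = 2$) and the archimedean place through separate explicit computations. Once these local computations are carried out, the global definability of $\Z$ in $\Q$ falls out of Hasse--Minkowski. Note that the resulting formula uses \emph{genuine} universal quantifiers and is not existential, which is both consistent with and complementary to the Mazur-conjecture obstructions discussed earlier in this exposition.
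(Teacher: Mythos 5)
The paper does not reproduce a proof; it states the theorem, cites \cite{Rob1}, and glosses the method as using quadratic forms and the Hasse--Minkowski theorem to produce an existential definition of the set $\mbox{Int}_q$ of $q$-integral rationals. Your sketch is a faithful reconstruction of exactly that argument --- local-global quadratic-form definitions of the rings $V_p$, wrapped in a universal quantifier over the form parameters together with Robinson's inductive $0$/successor clause, which is precisely the $(\forall\exists\forall\exists)$ shape the paper attributes to Cornelissen--Zahidi's analysis of Robinson's formula --- so it takes essentially the same route the paper references.
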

Julia Robinson used quadratic forms and Hasse-Minkwoski Theorem to prove her theorem and in the process produced an existential definition over $\Q$ of the set
\[
\mbox{Int}_q=\{x \in \Q: x =\frac{a}{b}, a, b \in \Z, b \not \equiv 0 \mod q\}
\]
for any given prime $q$.  It is this existential definition and the fact that we can ``simulate'' the denominators because we can define the set of non-zero elements of any small ring, that allowed us to conclude that $\Z$ was existentially definable over small rings.

In a 2007 paper Cornelissen and Zahidi analyzed Julia Robinson's formula and showed that it can be converted to a formula of the form $(\forall\exists\forall\exists)(F = 0)$
where the $\forall$-quantifiers run over a total of 8 variables, and where $F$ is a polynomial.  They also were the first ones to consider optimizing Julia Robinson's result using elliptic divisibility sequences:
\begin{theorem}[Cornelissen and Zahidi]
Assuming a (heuristically plausible) conjecture concerning denominators of points on elliptic curves  over $\Q$, there exists a first-order model of $\Z$ over $\Q$ using just one universal quantifier. (See \cite{CZ2}.)
\end{theorem}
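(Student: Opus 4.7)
The strategy is to preserve the high-level architecture of Julia Robinson's first-order definition of $\Z$ in $\Q$ --- namely, ``$y\in\Z$ iff for every prime $q$, $y\in\mbox{Int}_q$'' --- but replace the universal quantification over primes $q$ by a single universal quantification over a rational variable $n$ parametrizing indices of multiples of a generator $P$ of a rank-one elliptic curve $E/\Q$. The bridge between $n$ and a prime is the indicator prime $p_n$, defined as the largest primitive prime divisor of the denominator $V_n$ of $x([n]P)$, exactly as in the preceding section.

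First I would recall the Cornelissen--Zahidi repackaging of Julia Robinson's original formula into the shape $\forall\exists\forall\exists\,(F=0)$, which carries $8$ universal variables: the outer universal block effectively picks out a local condition at some prime $q$, while the inner universal block encodes a Hasse--Minkowski certificate for membership in $\mbox{Int}_q$. Collapsing both universal blocks into one forces a replacement of the quadratic-form certificate by a purely existential one. This is where the elliptic divisibility sequence enters: by not inverting any $p_n$, a class Diophantine treatment of $[n]P$ (as a quadruple $(U_n,V_n,X_n,Y_n)$) gives an existential handle on $p_n$ from $n$ alone, using the key identities $(V_m,V_n)=V_{(m,n)}$ and the existence of primitive divisors to isolate $p_n$.

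Next I would assemble the definition. Given $y\in\Q$ and a universally quantified $n\in\Q$, the existential body would: (i) guess the quadruple representing $[n]P$, forcing $n$ to be an integer via the elliptic curve equation and the divisibility-sequence identities (so that if $n\notin\Z$ the body is trivially satisfiable by a vacuity clause); (ii) existentially produce the indicator prime $p_n$ as the ``new'' prime factor of $V_n$ not appearing in $V_m$ for $|m|<|n|$; and (iii) existentially certify $y\in\mbox{Int}_{p_n}$ via Julia Robinson's existential formula for $\mbox{Int}_q$, now with $q=p_n$ an existentially produced witness rather than a free outer parameter. The heuristic conjecture on denominators is invoked at precisely this point: it must guarantee that the set $\{p_n : n\in\Z\}$ is rich enough --- e.g., that every prime (or at least every prime that could obstruct $y\in\Z$) arises as some $p_n$ --- so that the conjunction of the tests $y\in\mbox{Int}_{p_n}$ over all $n$ actually cuts out $\Z$ and not some larger ring.

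The main obstacle, and the reason the conjecture is needed, is ensuring that a single universal variable $n$ suffices to probe every relevant prime. The existential interior must simultaneously pin down $p_n$ from $n$ without inverting it, verify local integrality of $y$ at $p_n$, and absorb what used to be the inner universal quadratic-form block into the divisibility-sequence bookkeeping. Once one shows that the conjectural distribution of indicator primes is dense enough in the set of all primes, and that the primitive-divisor identities force the quadruple of $[n]P$ to locate $p_n$ uniquely from rational data, the resulting sentence takes the form $\forall n\,\exists\bar z\,\phi(y,n,\bar z)$, which is a first-order definition of $\Z$ in $\Q$ with a single universal quantifier.
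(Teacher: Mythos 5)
The paper does not actually prove this theorem; it is cited from Cornelissen and Zahidi's 2007 paper \cite{CZ2}, so there is no internal argument to compare against. Your high-level picture --- keep Julia Robinson's architecture $\Z=\bigcap_q \mbox{Int}_q$ with her existential definition of $\mbox{Int}_q$, and use elliptic divisibility sequences to let a single universal variable stand in for quantification over primes, with the EDS conjecture guaranteeing that enough primes are thereby probed --- is the correct general spirit, and is consistent with how the surrounding text frames the result.

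However, there are concrete gaps in the details of your proposal. First, the ``class Diophantine'' quadruple representation and the device of ``not inverting any $p_n$'' belong to the large-subring setting (Poonen, and Theorem \ref{thm:main} of this paper): over $\Q$ itself every nonzero integer is a unit, so there is no ring in which the indicator primes fail to be inverted, and those techniques simply do not transfer. Second, your step (ii) --- ``existentially produce the indicator prime $p_n$ as the `new' prime factor of $V_n$ not appearing in $V_m$ for $|m|<|n|$'' --- is not an existential condition: the defining property of a primitive divisor is a universal statement over all smaller indices $m$, and over $\Q$ one cannot even express ``$p\nmid V_m$'' without already having some arithmetic handle on $\Z$, which is what is being defined. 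Relatedly, your step (i), ``forcing $n$ to be an integer via the elliptic curve equation,'' does not work as stated, because every point of $E(\Q)$ satisfies the Weierstrass equation; nothing existential about a rational $n$ and a point $(x,y)$ links them as $[n]P$, so the intended vacuity clause has no existential trigger that fires exactly when $n\notin\Z$. Finally, the form of the conjecture you propose (that every prime arises as some $p_n$) is one plausible reading, but you present it as if the mechanism were clear; the actual Cornelissen--Zahidi argument pins down a specific statement about the factorization/primality behavior of EDS terms that makes the single-universal formula both sound and complete, and a correct proof would have to state that conjecture precisely and show how both directions of the biconditional use it.
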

And so a conjecture makes its appearance to help push a result along, though not yet a famous conjecture.

In a 2007 paper (see \cite{CS}), using elliptic curves along the lines of Poonen's method and results of Julia Robinson,  Cornelissen and the author showed that one could define  $\Z$ over a large subring of $\Q$ using two universal quantifiers.  Continuing further down this road, in 2008 Poonen in \cite{PO4} produced an unconditional improvement of the first-order definition of integers over $\Q$ and over some big rings.

\begin{theorem}
\begin{itemize}
\item  $\Z$ is definable over $\Q$ using just two universal quantifiers in a $\forall\exists$-formula.
\item For any $\varepsilon >0$, there exists a set of rational primes $\calW_{\Q}$ of natural density greater than $1-\varepsilon$ such that $\Z$ is definable using just one quantifier  in a $\forall\exists$-formula over $O_{\Q,\calW_{\Q}}$.
\end{itemize}
\end{theorem}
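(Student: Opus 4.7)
The plan is to refine Julia Robinson's first-order definition of $\Z$ in $\Q$ by compressing its universal quantifiers via the Hasse--Minkowski local-global principle applied to norm forms of quaternion algebras. Since $x \in \Z$ if and only if $v_p(x) \geq 0$ for every prime $p$, the task is to express ``$x$ is a $p$-adic integer for every $p$'' as a $\forall\exists$-formula with as few outer universal quantifiers as possible.

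For the first assertion, I would introduce two universally quantified parameters $a, b \in \Q$ that encode a rational quaternion algebra $\left(\tfrac{a,b}{\Q}\right)$. For each finite prime $p$, one can choose $(a,b)$ so that this algebra is ramified exactly at $\{p,\infty\}$; the corresponding ternary norm form $\langle a, b, -ab \rangle$ then detects $p$-adic integrality of $x$ through the local isotropy behavior of an appropriate $x$-twist at $p$, which is governed by $v_p(x)$. By Hasse--Minkowski, ``represents zero at every completion'' collapses to the purely existential ``represents zero rationally.'' One then packages this into a single polynomial equation $F(x,a,b,\bar y) = 0$, where $\bar y$ ranges over an existentially quantified rational tuple, such that for $(a,b)$ corresponding to a prime $p$ the solvability in $\bar y$ is equivalent to $v_p(x) \geq 0$, while for parameters not corresponding to any such $p$ the condition is vacuously true. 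Quantifying $\forall(a,b)$ then enforces integrality at every finite prime simultaneously, yielding the desired $\forall\exists$-definition of $\Z$ with two universal variables.

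For the second assertion, to define $\Z$ inside $O_{\Q,\calW_{\Q}}$ we only need to enforce $p$-adic integrality at primes $p$ lying outside $\calW_{\Q}$. The strategy is to fix one of the two parameters, say $b = b_0$, and let the remaining parameter $a$ vary. Using a Chebotarev-type density count on the Hilbert symbol, one can choose $b_0$ so that the set of primes $p$ realizable as the finite ramification place of $\left(\tfrac{a,b_0}{\Q}\right)$ for some $a \in \Q$ has natural density at least $1-\varepsilon$; taking $\calW_{\Q}$ to consist of the remaining primes (suitably padded to be recursive and compatible with the infinite place) yields a density-$(1-\varepsilon)$ ring inside which $\forall a\,\exists \bar y\,:\,F(x,a,b_0,\bar y) = 0$ defines $\Z$.

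The main technical obstacle is the quantitative density step in the second part: one must show that, for a suitable fixed $b_0$, the finite ramification primes of the family $\{\left(\tfrac{a,b_0}{\Q}\right)\}_{a\in\Q}$ exhaust all but an $\varepsilon$-proportion of the primes, which requires effective control of quadratic residues modulo $b_0$ and the splitting behavior of primes in associated quadratic extensions. A secondary subtlety is assembling the inner existential part into a \emph{single} polynomial equation rather than a Boolean combination, which is handled by the sum-of-squares reduction of a finite system of equations to one (as in the Replacing Finitely Many by One lemma above) together with the Lagrange four-squares positivity trick from Proposition \ref{prop:dioph}.
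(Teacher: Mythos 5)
Your general framework --- quaternion algebras, Hilbert symbols, and the local--global principle --- is indeed the one Poonen uses, and the formula reproduced in the text is its concrete realization. So the tools are right. But the sketch has a genuine gap in the first part and a genuine error in the second.

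For the first bullet, the phrase ``local isotropy behavior of an appropriate $x$-twist at $p$'' does not describe a mechanism that works, and it is not what Poonen does. The key Diophantine object is the set of \emph{reduced traces of norm-one elements} of the quaternion algebra $H_{a,b}$, namely $S_{a,b} = \{\,2x_1 : \exists x_2,x_3,x_4 \in \Q,\ x_1^2 - a x_2^2 - b x_3^2 + ab x_4^2 = 1\,\}$ (this is exactly what the inner $x$-block of Poonen's displayed formula produces). The central lemma is that, for $a,b>0$, $S_{a,b}+S_{a,b}+\{0,1,\dots,2309\}$ equals the semi-local ring $\bigcap_{p\in\Delta_{a,b}}\Z_{(p)}$, where $\Delta_{a,b}$ is the (finite, even) set of finite primes at which $H_{a,b}$ ramifies; the local input is that a norm-one element of a $p$-adic division quaternion algebra lies in its maximal order and hence has $p$-integral trace, and the converse direction is where Hasse--Minkowski and the small additive shift enter. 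Your sketch names none of this. Moreover, your plan to realize $\{p,\infty\}$ as the exact ramification locus is incompatible with the vacuity device: the leading factor $(a+\sum a_i^2)(b+\sum b_i^2)$ makes the equation trivially solvable exactly when $a\le 0$ or $b\le 0$, so only $a,b>0$ contribute, and then $(a,b)_\infty=1$; the algebra is never ramified at $\infty$, and $\Delta_{a,b}$ has an even number of finite primes, one of which you arrange to be $p$.

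For the second bullet, there are two concrete problems. First, the inclusion is backwards: in $O_{\Q,\calW_\Q}$ the denominators of elements are supported \emph{inside} $\calW_\Q$, so the primes at which integrality must be enforced are exactly those of $\calW_\Q$, not its complement; hence $\calW_\Q$ must be contained in the set of primes the single-variable family can see. Second, and fatally for your plan, fixing a single $b_0$ caps the attainable density at $1/2$: the algebra $\left(\tfrac{a,b_0}{\Q}\right)$ can ramify at $p$ for some $a$ if and only if $b_0\notin(\Q_p^\times)^2$, i.e.\ if and only if $p$ is non-split in $\Q(\sqrt{b_0})$, a set of Chebotarev density exactly $1/2$ (or $0$ if $b_0$ is a square). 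No choice of $b_0$ makes the realizable ramification primes have density $1-\varepsilon$, so a ``freeze $b$, vary $a$'' scheme cannot produce a ring of density close to $1$. One has to let $b$ vary \emph{with} $a$ --- for instance via several Diophantine substitutions $b=g_i(a)$ whose associated ramification conditions, taken together inside the existential block, leave a prime uncaught only when a conjunction of independent quadratic-residue conditions holds --- to push the density of catchable primes above any $1-\varepsilon$. That extra idea is what your proposal is missing.
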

Poonen used quadratic forms, quaternions and the Hasse Norm Principle.  His definition of $\Z$ over $\Q$ is simple enough to be reproduced here:
the set $\Z$ equals the set of $t \in \Q$ for which the following formula is true over $\Q$:
\begin{align*}
(\forall a, b)(\exists a_1, a_2, a_3, a_4, b_1, b_2, b_3, b_4, x_1, x_2, x_3, x_4, y_1, y_2, y_3, y_4, n)\\
(a + a_1^2 + a_2^2 + a_3^2+ a_4^2)(b + b^2_1 + b^2_2 + b^2_3 + b^2_4 )\cdot\\
[(x^2_1- ax_2^2 - bx^2_3 + abx^2_4-1)^2 + (y^2_1 - ay_2^2 - by^2_3 + aby^2_4 -1)^2+\\
+ n^2+(n-1)^2 \ldots (n - 2309)^2 + (2x_1 + 2y_1 + n- t)^2]= 0
\end{align*}
Using existential definition of multiplication on indices one can also show the following.

\begin{theorem}%
There exists a set $\calW$ of primes of $\Q$ of natural density one such that $\Z$ is first-order definable over $O_{\Q,\calW}$ using just one universal
quantifier in a $\forall\exists$-formula. (See \cite{Sh38}.)
\end{theorem}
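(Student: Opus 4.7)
The goal is to improve the second bullet of the preceding theorem by replacing ``natural density greater than $1-\varepsilon$'' with ``natural density one,'' while still using only a single universal quantifier. The strategy is to combine Poonen's quaternion-based one-universal-quantifier $\forall\exists$-formula with the class Diophantine model of $\Z$ over $O_{\Q,\calW}$ supplied by Corollary~\ref{cor:main}, which itself rests on the Diophantine definition of index multiplication in Theorem~\ref{thm:main}.

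First I would fix $\calW$ to be the density-one set of primes arising from Theorem~\ref{thm:main}, so that $O_{\Q,\calW}$ does not invert the indicator primes $p_{\ell^i}$ (which form the density-zero set $\calV$) but does invert everything else.  Over this ring, Theorem~\ref{thm:main} gives a Diophantine graph for multiplication on indices of multiples of a rank-one generator $P$, and Corollary~\ref{cor:main} promotes this into a full class Diophantine model of $\Z$: an internal copy of $\Z$ whose addition, multiplication, and equality relations have Diophantine graphs at the level of representative quadruples $(U,X,V,Y) \in O_{\Q,\calW}^4$.

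The central step is to design a single $\forall\exists$-formula over $O_{\Q,\calW}$, with exactly one universal quantifier, that defines $\Z$.  The formula would retain a single universal quantifier from Poonen's quaternion-and-Hasse-norm framework, followed by an existential block.  Inside this block I would place the familiar Lagrange sum-of-squares and quaternion-norm witnesses adapted to $O_{\Q,\calW}$, augmented by a witness quadruple $(U,X,V,Y)$ drawn from the class Diophantine model, certifying that $t$ equals the integer $n$ indexed by the elliptic-curve multiple $[n]P$.  Since this additional clause is purely existential, it fits entirely into the $\exists$-block without contributing any new universal quantifier; its job is to shore up the formula at those primes that had to be inverted in Poonen's density-$(1-\varepsilon)$ version but which are \emph{not} inverted here.

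The main obstacle will be the existential identification of the integer $n$ encoded by the class representative $(U,X,V,Y)$ with the ring element $t \in O_{\Q,\calW}$ itself.  The class model supplies arithmetic on indices, but matching an index to a concrete rational requires exploiting the non-invertibility of the indicator primes (which pins down $n$ from divisibility of $V_n$ in $O_{\Q,\calW}$) together with the Lagrange four-squares trick to control positivity and magnitude.  One must then verify that this matching is expressible as a single existential clause, that splicing the Poonen-style and elliptic-curve clauses into one $\forall\exists$-formula does not tacitly reintroduce a second universal quantifier, and that the resulting formula defines $\Z$ neither more nor less.  The density-one property of $\calW$ is precisely what makes the identification unconditional, playing the role that the Cornelissen and Zahidi heuristic conjecture plays in their conditional definition over $\Q$.
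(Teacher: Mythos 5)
The paper itself does not give a proof of this theorem; it only offers the one-line motivation ``Using existential definition of multiplication on indices one can also show the following'' and cites \cite{Sh38}.  So the comparison below is against that indicated direction.  Your instinct that the result rests on the elliptic-curve index-multiplication machinery of Theorem~\ref{thm:main} and Corollary~\ref{cor:main} is correct, and your choice of $\calW$ as the density-one complement of the non-inverted indicator primes matches the paper's setup.  However, there are two substantive problems with the plan as written.

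First, the claim that the class Diophantine model supplies an existential witness ``certifying that $t$ equals the integer $n$ indexed by the elliptic-curve multiple $[n]P$'' is exactly the step that needs an idea, and the proposal does not provide one.  In Poonen's approximation model (\cite{Po}) the coordinate $y_{\ell_n}$ is numerically close to $n$, so the index is recoverable from the ring element by an explicit inequality; but in the class Diophantine model of Corollary~\ref{cor:main} the integer $n$ lives only as the \emph{index} of the point $[m_0 n]P$, and nothing in the quadruple $(U_{m_0n},X_{m_0n},V_{m_0n},Y_{m_0n})$ reproduces $n$ as a ring element of $O_{\Q,\calW}$.  The class model is built precisely because the coordinates are \emph{not} in the ring and a point only has a non-unique representation up to the $\approx$-relation; there is no natural ``decoding'' map from representatives to integers.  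Acknowledging this as ``the main obstacle'' but leaving it unresolved is where the argument as stated breaks down.  The role of the index-multiplication theorem in \cite{Sh38} is not to pick out $n$ existentially; rather, it lets one quantify universally over a Diophantine-definable family of elliptic-curve divisibility data and use the \emph{denominators} to impose integrality constraints on $t$, in the spirit of \cite{CS} and \cite{CZ2} (which, notably, use Julia Robinson's quadratic-form results and elliptic divisibility sequences, not Poonen's quaternion framework).  Importing Poonen's quaternion formula wholesale and hoping the new existential clause repairs it is not supported by anything in the paper.

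Second, the sentence about the extra existential clause shoring up the formula ``at those primes that had to be inverted in Poonen's density-$(1-\varepsilon)$ version but which are not inverted here'' has the difficulty reversed.  A prime that is not inverted in $O_{\Q,\calW}$ imposes no constraint at all: $\ord_p(t)\geq 0$ holds automatically for every element of the ring.  The genuinely hard primes are the ones that \emph{are} inverted in the density-one set $\calW$ but were excluded from Poonen's $\calW_{\Q}$; passing from density $1-\varepsilon$ to density one inverts \emph{more} primes and therefore makes the integrality condition \emph{harder} to capture, not easier.  This inversion of the intuition suggests that the proposal has not pinned down what work the single universal quantifier must actually do.
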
%

Further  the following result was announced by J. Koenigsmann in February 2009.

\begin{theorem}
$\Z$ is first-order definable over $\Q$ using just one $\forall$-quantifier in a $\forall\exists$ - formula.
\end{theorem}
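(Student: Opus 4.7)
The plan is to strengthen Poonen's two-quantifier result by producing a polynomial $P(t, x, y_1, \ldots, y_k) \in \Z[t, x, y_1, \ldots, y_k]$ such that
$$ t \in \Z \iff \forall x \in \Q \ \exists y_1, \ldots, y_k \in \Q : P(t, x, y_1, \ldots, y_k) = 0. $$
The approach builds on the quaternionic strategy of Poonen but compresses the universal side from two variables down to one. Broadly, a single universally quantified rational variable $x$ will be used to parametrize a family of quaternion algebras covering every prime of $\Q$, and $p$-adic integrality of $t$ will be expressed as an existential condition on the auxiliary $y_j$'s once the algebra has been pinned down by $x$.

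The first step is to refine the quaternion algebra description of the local rings $\Z_{(p)}$. For each odd prime $p$ and each prime $q \equiv 3 \pmod{4}$ that is a quadratic non-residue modulo $p$, the algebra $(p,q)_{\Q}$ is ramified precisely at $p$ and $q$. By the Hasse--Minkowski principle, the set of $t \in \Q$ for which a suitable twist of such an algebra splits at a prescribed place is diophantine in $t$; carefully combining such conditions produces a diophantine description of $\Z_{(p)}$, and then $\Z = \bigcap_p \Z_{(p)}$. This much already yields a universal definition of $\Z$, but with unboundedly many universal variables.

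The second, decisive step is the reduction to a single universal quantifier. The observation is that the ``bad'' primes $p$ at which a would-be $t \notin \Z$ fails to be integral can be encoded by a single rational parameter, and, dually, one needs a polynomial parametrization whose image, as the universal variable $x$ runs over $\Q$, exhausts (up to trivial cases) all the local tests needed to certify $t \in \Z$. Once such a parametrization is in place, the quaternion data and local witnesses required to certify $p$-adic integrality of $t$ are produced existentially in $y_1, \ldots, y_k$ for each admissible $x$; for ``useless'' values of $x$ the polynomial is made to vanish trivially. Dirichlet's theorem on primes in arithmetic progressions is used to guarantee the existence of auxiliary primes $q$ realizing prescribed local conditions at $p$, which is what allows the collapse of Poonen's two universal variables to one.

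The main obstacle is the existence and correctness of the required polynomial parametrization. One must show, on the one hand, that for every $t \notin \Z$ there exists a specific $x_0 \in \Q$ at which the existential witnesses \emph{cannot} be supplied (so the $\forall$-statement fails), and, on the other, that for every $t \in \Z$ and every $x \in \Q$ the witnesses \emph{can} be supplied via Hasse--Minkowski and a Lagrange four-square encoding of the local norm conditions. The prime $2$ again requires special handling, since the Hilbert symbol at $2$ behaves differently from the odd primes; this will be absorbed into a separate branch of the polynomial. Verifying that the universal quantifier genuinely scans all the local obstructions, with no prime escaping detection, is the technical heart of the argument.
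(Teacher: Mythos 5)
The paper does not actually prove this theorem; it merely records it as a result announced by J.\ Koenigsmann in February 2009, so there is no proof in the paper to compare your attempt against.

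Evaluated on its own terms, your sketch correctly names the ingredients of Koenigsmann's argument --- quaternion algebras over $\Q$, Hilbert symbols together with Hasse--Minkowski and the Hasse norm principle, the decomposition $\Z=\bigcap_p\Z_{(p)}$, and auxiliary primes supplied by Dirichlet --- but it stops precisely where the work begins. You acknowledge that ``the existence and correctness of the required polynomial parametrization'' is ``the technical heart of the argument'' and then do not supply it. What Koenigsmann actually does is construct explicit diophantine sets from traces of norm-one elements of the quaternion algebras $H_{a,b}$, prove a local--global structure theorem describing these sets place by place (with separate analyses at $p=2$, at primes dividing $ab$, and at the archimedean place), assemble from this a $\forall\forall\exists$-definition of $\Z$, and then --- this is the decisive new step --- collapse the pair of universal variables $(a,b)$ to a single one by a delicate re-engineering of the formula that is anything but a routine change of variables. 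None of that appears here: you have neither the auxiliary sets, nor the formula, nor a proof that one universal variable suffices to scan every prime. Two further cautions: $\Z_{(p)}$ is not a diophantine subset of $\Q$, so ``a diophantine description of $\Z_{(p)}$'' must be replaced by the weaker and more carefully stated assertion Koenigsmann actually proves; and the claim that ``for every $t\in\Z$ and every $x\in\Q$ the witnesses can be supplied'' is exactly the nontrivial half of the equivalence and cannot simply be asserted. As written this is a plan, not a proof.
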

To summarize the discussion so far we can conceive of the following big definability project partly discussed already in Section \ref{subsec:between}.
\begin{question} For which big subrings of $\Q$ is the following  true?
\begin{itemize}
\item  $\Z$ is existentially definable.
\item $\Z$ has an existential model.
\item $\Z$ is definable using one universal quantifier.
\end{itemize}
\end{question}

\begin{remark}
If we start counting the number of quantifiers we use, definability results over the field will no longer automatically imply the analogous definability results for the subrings.  Thus Koenigsman's result for $\Q$ does not automatically answer the question about the big subrings in general.
\end{remark}

We now briefly consider the progress made on the other arguably most interesting problem in the area: the Diophantine (un)decidability of the rings of integers of number fields.

\section{Meanwhile in a Galaxy not Far Away}
We start with a review of some terms.
\begin{itemize}
\item A number field is a finite extension of $\Q$.
\item A totally real number field is a number field all of whose embeddings into its algebraic closure are real.
\item A ring of integers $O_K$ of a number field $K$ is the set of all elements of the number field satisfying monic irreducible polynomials over $\Z$ or alternatively the integral closure of $\Z$ in the number field.
\item A prime of a number field $K$ is a prime ideal of $O_K$.  If $x \not = 0$ and $x \in O_K$, then for any prime $\pp$ of $K$ there exists a non-negative integer $m$ such that $x \in \pp^m$ but $x \not \in \pp^{m+1}$.  We call $m$ the order of $x$ at $\pp$ and write $m = \ord_{\pp}x$.  If $y \in K$ and $y \not = 0$, we write $y =\frac{x_1}{x_2}$, where $x_1, x_2 \in O_K$ with $x_1x_2 \not = 0$, and define $\ord_{\pp}y = \ord_{\pp}x_1 - \ord_{\pp}x_2$.  This definition is not dependent on the choice of $x_1$ and $x_2$ which are of course not unique.  We define $\ord_{\pp}0 = \infty$ for any prime $\pp$ of $K$.
\item  Any prime ideal $\pp$ of $O_K$ is maximal and the residue classes of $O_K$ modulo $\pp$ form a field.  This field is always finite and its size (a power of a rational prime number) is call the norm of $\pp$ denoted by ${\mathbf N}\pp$.
\item If $\calW$ is a set of primes of $K$, its natural density is defined to be the following limit if it exists:
\[
\lim_{X \rightarrow \infty}\frac{\#\{\pp \in \calW, {\mathbf N}\pp\leq X\}}{\# \{{\mathbf N} \pp \leq X\}}
\]
\item  Let $K$ be a number field and let $\calW$ be a set of primes of $K$.  Let $O_{K,\calW}$ be the following subring of $K$.
\[%
 \{x \in K: \ord_{\pp} x \geq 0 \, \,\forall \pp \not \in \calW\}
\]%
If $\calW = \emptyset$, then $O_{K,\calW}=O_K$ -- the ring of integers of $K$.  If $\calW$ contains all the primes of $K$,  then $O_{K,\calW}=K$.  If $\calW$ is finite, we call the ring \emph{small} (or the ring of $\calW$-integers).  If $\calW$ is infinite, we call the ring \emph{large}, and if the natural density of $\calW$ is one, we call the ring ``very large''.  These rings are the counterparts of the ``in between'' subrings of $\Q$.
\end{itemize}
The state of knowledge concerning the rings of integers is summarized in the theorem below.
\begin{theorem}%
\label{thm:ext1}
$\Z$ is Diophantine and HTP is unsolvable  over the rings of integers of the following fields:%
\begin{itemize}
\item ``Most'' extensions of degree 4 of $\Q$, totally real number fields and their extensions of degree 2. (See \cite{Den1}, \cite{Den2}.)  Note that
these fields include all abelian extensions.
\item Number fields with exactly one pair of non-real embeddings (See \cite{Ph1} and \cite{Sh2}.)%
\item Any number field $K$ such that there exists an elliptic curve $E$ of positive rank defined over $\Q$ with $[E(K):E(\Q)] < \infty$. (See \cite{Po2}, \cite{Sh33}, \cite{Po3}.)%
\item Any number field $K$ such that there exists an elliptic curve of rank 1 over $K$ and an abelian variety over $\Q$ keeping its rank over $K$. (See \cite{CPZ}.)
\end{itemize}%
\end{theorem}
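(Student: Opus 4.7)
The plan is to reduce the theorem, in each of its four clauses, to the construction of a Diophantine definition of $\Z$ over $O_K$. Once that is in hand, undecidability of HTP over $O_K$ follows immediately from the MDRP theorem exactly as in the proof over $\Z$ sketched earlier: if $p(T,\bar X)$ is a Diophantine definition of $\Z$ and $Q(T)=0$ is a single-variable polynomial with undecidable integer solvability, then the system $\{Q(T)=0,\ p(T,\bar X)=0\}$ has a solution in $O_K$ iff $Q$ has a zero in $\Z$. Thus the entire content of the theorem sits in producing the definition $p(T,\bar X)$ in each case.

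For the totally real fields, their degree-$2$ extensions, and the ``most degree-$4$ extensions'' clause I would use Pell equations $x^2-dy^2=1$ with $d$ a totally positive non-square rational integer: the totally real hypothesis forces the fundamental solution to behave rigidly at every archimedean place, and combining this with the $O_K$-version of the Lagrange four-squares trick of Proposition~\ref{prop:dioph} lets one cut the $O_K$-integers down to those bounded at every real embedding by a fixed constant, which is exactly $\Z$. For the relative quadratic extension one appends a descent step using a relative Pell equation. The one-pair-of-complex-embeddings case is analogous, with norm equations from $K$ to its maximal totally real subfield $K_0$ reducing the problem to the Denef case in $O_{K_0}$, and the archimedean norm supplying the control needed at the single complex place.

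For the two elliptic-curve clauses the elliptic curve replaces Pell as the source of an arithmetically controlled infinite family: fix a generator $P\in E(\Q)$ (resp.\ $P\in E(K)$) of infinite order, and translate the finite-index hypothesis $[E(K):E(\Q)]<\infty$ (resp.\ rank-preservation of the auxiliary abelian variety over $\Q$) into the assertion that every $K$-rational multiple of $P$ is, up to bounded index, a $\Q$-rational multiple. The divisibility-sequence properties of the denominators of $[n]P$ reviewed in the Denominators subsection then furnish a Diophantine characterization of the indices $n\in\Z$, and hence of $\Z$ itself sitting inside $O_K$. The main obstacle in each clause is the transition from a geometric or analytic fact (archimedean boundedness, finiteness of $E(K)/E(\Q)$, a divisibility relation among denominators) to a polynomial-level existential sentence; in the elliptic-curve clauses most of the technical weight lies in a primitive-divisor analysis in the style of Poonen's, and this is where I would expect to spend the bulk of the effort, while the Pell-based clauses are more routine but require careful bookkeeping through the relative extensions.
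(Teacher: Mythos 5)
The paper does not prove Theorem~\ref{thm:ext1}; it records it as a survey of results with citations to Denef, Denef--Lipshitz, Pheidas, Poonen, Cornelissen--Pheidas--Zahidi, and the author's own papers, so there is no internal argument for your proof to be checked against. Your opening reduction is correct and matches the paper's logic elsewhere: once $\Z$ has a Diophantine definition $p(T,\bar X)$ over $O_K$, undecidability of HTP over $O_K$ follows by conjoining copies of $p(T_i,\bar X_i)=0$ to an arbitrary integer-coefficient polynomial equation and appealing to the MDRP theorem, exactly as the paper does for $\Q$.

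Your case-by-case sketches, however, are not self-contained proofs, and the Pell clause contains a concrete error. If $K$ is totally real of degree $n>1$, the set of $\alpha\in O_K$ that are bounded by a fixed constant at \emph{every} real embedding is a \emph{finite} set (the lattice points of $O_K$ inside a box under the Minkowski embedding); it is not $\Z$. So ``cut the $O_K$-integers down to those bounded at every real embedding'' cannot be the mechanism, and as written the step would fail. The actual Denef--Lipshitz argument does use Pell equations and total positivity, but the decisive move is a divisibility and congruence argument along the Pell unit sequence (roughly, that $(\varepsilon^n-1)/(\varepsilon-1)$ controls the index $n$) which transfers integrality at the level of exponents; archimedean boundedness plays only a supporting role. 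Similarly, your elliptic-curve sketch names the right ingredients (divisibility sequences of denominators, primitive divisors, the finite-index hypothesis $[E(K):E(\Q)]<\infty$ making $E(K)$ essentially one-dimensional over $\Q$), but omits the actual transfer mechanism from the geometric hypothesis to a polynomial existential sentence, which in \cite{Po2}, \cite{Sh33}, and \cite{CPZ} is the substance of the proof. In short: correct framework, but the per-case sketches would not stand on their own, and the Pell clause as written is wrong.
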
%

We also have several results concerning big rings in number fields.  Note that in the big rings below we actually give a Diophantine definition of $\Z$.
\begin{theorem}
\label{thm:ext2}
Let $K$ be a number field satisfying one of the following conditions:
\begin{itemize}
\item $K$ is a totally real field.
\item $K$ is an extension of degree 2 of a totally real  field.
\item There exists an elliptic curve $E$ defined over $\Q$ such that $[E(K):E(\Q)]< \infty$.
\end{itemize}
Let $\varepsilon >0$ be given.  Then there exists a set $\calS$ of non-archimedean primes of $K$ such that
\begin{itemize}%
\item The natural density of $\calS$ is greater $\displaystyle 1-\frac{1}{[K:\Q]}-\varepsilon$.
\item $\Z$ is Diophantine over $O_{K,\calS}$.
\item HTP is unsolvable over $O_{K,\calS}$.
\end{itemize}
(See \cite{Sh1}, \cite{Sh6}, \cite{Sh3},  \cite{Sh33}, \cite{Sh37}.)
\end{theorem}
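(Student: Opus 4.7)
The plan is to reduce the statement to Theorem \ref{thm:ext1}: if $O_K$ is Diophantine inside $O_{K,\calS}$, then composing with the Diophantine definition of $\Z$ over $O_K$ (available in all three cases by Theorem \ref{thm:ext1}) gives a Diophantine definition of $\Z$ over $O_{K,\calS}$, and the unsolvability of HTP over $O_{K,\calS}$ follows because any undecidable Diophantine subset of $\Z$ pulls back to an undecidable Diophantine subset of $O_{K,\calS}$.

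The core task is thus to construct a set $\calS$ of non-archimedean primes of $K$ of natural density exceeding $1 - 1/[K:\Q] - \varepsilon$ together with a Diophantine condition on $O_{K,\calS}$ equivalent to integrality at the primes outside $\calS$. For the first two hypotheses I would use norm equations from a cyclic extension $M/K$ of degree $n$ chosen close to $[K:\Q]$: for $K$ totally real, take $M = KL$ where $L/\Q$ is a cyclic subfield of a cyclotomic field of degree $n$, linearly disjoint from $K$; for $K$ a degree-$2$ extension of a totally real $K^+$, an analogous compositum construction built on the totally real subfield works. By the Hasse Norm Theorem for cyclic extensions, ``$x$ is a norm from $M$ to $K$'' is equivalent to being a local norm at every place, and at an unramified prime $\pp$ of $K$ with residue-field degree $f > 1$ in $M$ this forces $f \mid \ord_{\pp} x$. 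Defining $\calS$ to be the set of primes of $K$ that do not split completely in $M$ (after removing finitely many ramified or auxiliary primes), Chebotarev's theorem gives density $(n-1)/n$, which exceeds $1 - 1/[K:\Q] - \varepsilon$ for $n$ sufficiently close to $[K:\Q]$.

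To upgrade the divisibility-of-order constraints produced by a single norm equation to the nonnegativity-of-order constraints that characterize $O_K$, I would combine several norm equations from a chain of auxiliary cyclic extensions and apply them simultaneously to $x$ and to carefully chosen shifts $c-x$, using the four-squares-style Diophantine definition of nonnegativity from Proposition \ref{prop:dioph} to tie positivity information to the norm data. For the third hypothesis, where $K$ admits an elliptic curve $E/\Q$ with $[E(K):E(\Q)] < \infty$, the divisibility bookkeeping is supplied instead by the elliptic divisibility sequence of a generator of $E(\Q)$, as in Poonen's method and its refinements: the indicator primes, shown by Poonen via Serre-type equidistribution to have density zero, are kept out of $\calS$, and denominators of $x$-coordinates of multiples of the generator take the role the norm equation plays in the first two cases.

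The main obstacle is the passage from divisibility to nonnegativity: norm equations (and elliptic divisibility sequences) naturally produce congruence conditions on $\ord_{\pp} x$, but integrality is a genuine inequality. The upgrade requires delicately combining multiple simultaneous Diophantine conditions, typically via shifts and via pairing with solutions to auxiliary quadratic forms; this is where the specific hypothesis on $K$ enters most strongly, since each hypothesis supplies a different natural source of Diophantine elements (cyclic Kummer-type extensions for totally real $K$, their degree-$2$ lifts for the second case, and torsion-free rank-one elliptic structure for the third). Matching the density to exactly $1 - 1/[K:\Q] - \varepsilon$ is then a Chebotarev-plus-optimization step once the rest of the Diophantine machinery is in place.
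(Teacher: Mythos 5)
The paper itself contains no proof of Theorem~\ref{thm:ext2}; it is stated as a survey result with the arguments delegated to \cite{Sh1}, \cite{Sh6}, \cite{Sh3}, \cite{Sh33}, and \cite{Sh37}. Your sketch therefore cannot be compared against a proof in the text, only against those sources, and at the level of tool selection it is broadly aligned with them: norm (Pell-type) equations in auxiliary cyclic extensions together with the Hasse Norm Theorem and Chebotarev density for the totally-real and degree-two-over-totally-real hypotheses, and elliptic divisibility sequence properties for the stable-rank elliptic-curve hypothesis. You also correctly identify the structural shape of the argument: produce a prime set $\calS$ of the required density together with a Diophantine way to enforce integrality at the primes outside it, then compose with the Theorem~\ref{thm:ext1} definitions over $O_K$.

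The step you yourself flag as ``the main obstacle'' is, however, exactly where the substance of the proof lives, and your description of how to cross it is too thin to be an argument. A single norm equation constrains $\ord_\pp x$ to a residue class modulo the local degree, not to $\Z_{\geq 0}$, and the four-squares device of Proposition~\ref{prop:dioph} only controls signs at the real archimedean places --- it has no bearing on $\pp$-adic orders, so invoking it ``to tie positivity information to the norm data'' does not by itself convert divisibility into integrality. What the cited papers actually do is Shlapentokh's weak vertical method: produce Pell-equation (respectively elliptic-curve) solutions with simultaneously controlled archimedean size and controlled denominators, take differences against conjugates over $\Q$ (or over a distinguished subfield), and use height/boundedness estimates to force the surviving elements to have nonnegative order at the primes one wishes to control. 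It is there, not in the Chebotarev bookkeeping, that the specific hypotheses on $K$ are consumed and where the density constant $1-1/[K:\Q]-\varepsilon$ is actually earned. Without a concrete version of that mechanism, your proposal names the gap but does not close it.
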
%
Over very large subrings, as was the case over $\Q$, we have Diophantine models of $\Z$ only.  The first theorem is a number field version of Poonen's method for $\Q$.  However the situation is more complicated over a number field and instead of constructing a model of $\Z$ by ``approximation'', what is constructed here is a model of a subset of the rational integers over which one can construct a model of $\Z$.  In short, one constructs a ``model of a model''.

\begin{theorem}
\label{thm:ext3}
Let $K$ be a number field with a rank one elliptic curve.  Then there exist recursive sets of
$K$-primes ${\mathcal T}_1$ and ${\mathcal T}_2$, both of natural density zero and with an
empty intersection, such that for any set ${\mathcal S}$ of  primes of $K$  containing ${\mathcal
T}_1$ and avoiding  ${\mathcal T}_2$, $\Z$ has an existential model and Hilbert's Tenth Problem is unsolvable over $O_{K,\calS}$.
(See \cite{PS}.)
\end{theorem}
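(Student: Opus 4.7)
The plan is to generalize Poonen's construction over $\Q$ (the theorem stated just before Theorem \ref{thm:main}) to the number field $K$. Fix an elliptic curve $E/K$ of rank one together with a generator $P$ modulo torsion and write $[n]P = (x_n, y_n)$. Let $\mathfrak{d}_n$ denote the denominator ideal of $x_n$ in $O_K$; standard results about the formal group of $E$ show that $\{\mathfrak{d}_n\}$ is (essentially) a strong divisibility sequence and that every $[n]P$ has a primitive divisor $\pp_n$ for $|n|$ sufficiently large. These primitive divisors are the mechanism for controlling which multiples of $P$ can have coordinates in the big ring, exactly as in the rational case.

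I would then let $\calT_2$ collect a distinguished primitive divisor $\pp_\ell$ for each rational prime $\ell$, together with finitely many exceptional primes arising from bad reduction and torsion. I would let $\calT_1$ be an auxiliary recursive density-zero set of primes chosen to make the archimedean-approximation and sum-of-squares conditions Diophantinely expressible over $O_{K,\calS}$. The technical core is showing that $\calT_2$ has natural density zero: this is a Chebotarev/Serre-type argument applied to the Galois action on $E[\ell]$ over $K$, adapting Poonen's density computation (which leaned on Serre's open image theorem) to the number-field setting. Once these sets are in place, for any $\calS$ containing $\calT_1$ and disjoint from $\calT_2$, the only multiples of $P$ with coordinates in $O_{K,\calS}$ are the $[n]P$ for $n$ ranging over a recursive, controlled index set.

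Next I would build the existential model. After possibly replacing $P$ by $[m_0]P$ for a fixed $m_0$, choose an archimedean place $\sigma$ of $K$ in which $y_n$ has predictable growth, and arrange (via an approximation analogous to $|y_{\ell_j}-j|<10^{-j}$) that addition of indices is detected by the elliptic group law combined with the Diophantine definition of non-negative elements of $O_{K,\calS}$ (a Lagrange-style identity suitably adjusted for $K$, in the spirit of Proposition \ref{prop:dioph}). Multiplication of indices is then obtained via squaring, using $xy = \tfrac{1}{2}\bigl((x+y)^2 - x^2 - y^2\bigr)$ inside the ring. This yields an existential model of a recursively identified subset $A\subset\Z$. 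The final ``model of a model'' step then bootstraps the partial model of $A$ into a model of all of $\Z$ by interpreting the remaining integer arithmetic Diophantinely inside $A$.

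The main obstacle is the density-zero result for $\calT_2$ over $K$: the Galois representation on $E[\ell]$ over a number field can have small image (for example for CM curves), so one must either restrict to curves whose mod-$\ell$ representations are large for almost all $\ell$, or argue more delicately about the distribution of primitive divisors. A secondary difficulty is that $K$ has several archimedean places, so approximation at a single chosen $\sigma$ can be spoiled at the others; $\calT_1$ and the normalizing multiple $m_0$ must be designed to suppress this behavior. The ``model of a model'' reduction itself is comparatively routine once a clean existential model of $A$ has been secured.
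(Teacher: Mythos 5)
Your sketch correctly identifies the elliptic-divisibility-sequence machinery, the role of primitive divisors, the design of $\calT_2$, and the need for a Serre/Chebotarev density argument. But the heart of the proposal---transplanting Poonen's archimedean approximation $|y_{\ell_j}-j|<10^{-j}$ to $K$ by singling out one archimedean place $\sigma$---is precisely what does \emph{not} carry over, and the paper says so explicitly: the Poonen--Shlapentokh construction proceeds ``\emph{instead of}'' the approximation, not on top of it. The difficulty you flag as secondary is actually decisive: over a number field, the Diophantine definition of non-negativity (the Lagrange-type identity in Proposition~\ref{prop:dioph}, or its number-field analogue) inevitably constrains all real embeddings simultaneously. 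There is no Diophantine predicate of the form ``$\sigma(x)\geq 0$ at the one chosen place $\sigma$,'' so the inequalities $|y_{\ell_i}+y_{\ell_j}-y_{\ell_k}|<1/3$ at a single archimedean place cannot be expressed as polynomial conditions over $O_{K,\calS}$, and there is no mechanism (choosing $\calT_1$ or replacing $P$ by $[m_0]P$) that forces the $y$-coordinates to approximate integers at every archimedean place at once.

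What \cite{PS} actually does is replace the archimedean detection entirely by a \emph{non-archimedean} discreteness argument (hence the title of that paper): one Diophantinely defines an infinite set of elliptic-curve points whose coordinates lie in $O_{K,\calS}$ by controlling denominators at the blocked primitive-divisor primes in $\calT_2$, obtaining a recursive bijection with a sparse subset $A\subset\Z_{>0}$ together with some Diophantine relations on $A$ inherited from the group law and the divisibility structure of the denominators. This structure on $A$ is not yet $(\Z,+,\times)$; the ``model of a model'' is not a routine bootstrap at the end but the main substitute for Poonen's approximation: one must identify precisely which relations on $A$ are Diophantinely available and then show that $(\Z,+,\times)$ is existentially interpretable in the resulting structure. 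Your proposal neither specifies the intermediate structure on $A$ nor explains the second interpretation, so the core of the argument is missing. To repair the sketch, drop the archimedean approximation, make the non-archimedean definability of the discrete set of points the first theorem, describe explicitly which arithmetic relations on $A$ that gives you, and then give the existential interpretation of $\Z$ in that restricted structure.
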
%

There is also a version of index multiplication over number fields.  Here the strategy remains the same as over $\Q$.
\begin{theorem}%
\label{thm:ext4}
 Let $K$ be a number field.  Let $E$ be an elliptic curve defined and of rank one over $K$.  Let $P$ be a
generator of $E(K)$ modulo the torsion subgroup, and fix an affine Weierstrass equation for $E$ of the form $y^2=x^3+ ax +b$, with $a, b \in O_K$.  Let $(x_n,y_n)$ be the coordinates of $[n]P$ with $n \not = 0$ derived from this Weierstrass equation.  Then there exists a recursive set of $K$-primes $\calW_K$ of natural density one,  and a positive integer $m_0$ such that the following set $\Pi \subset O_{K,\calW_K}^{12}$ is Diophantine over $O_{K,\calW_K}$:
\[%
\begin{array}{c}
(U_1, U_2, U_3, X_1,X_2,X_3, V_1,V_2, V_3, Y_1, Y_2, Y_3)\in \Pi \Leftrightarrow\\
\exists \mbox{ unique } k_1, k_2, k_3 \in \Z_{\not= 0}   \mbox{ such that } \\
\left(\frac{U_i}{V_i}, \frac{X_i}{Y_i}\right )=(x_{m_0k_i}, y_{m_0k_i}), \mbox{ for } i=1,2,3, \mbox{ and } k_3=k_1k_2,
\end{array}
\]%
$\Z$ has a class Diophantine model, and HTP is unsolvable over this ring.\\
See \cite{Sh38}
\end{theorem}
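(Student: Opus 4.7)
The plan is to mirror the proof of Theorem \ref{thm:main} / Corollary \ref{cor:main} verbatim in the number field setting, with the key adaptations being: (i) rational primes are replaced by prime ideals $\pp$ of $O_K$, (ii) natural density is measured with respect to $\mathbf{N}\pp$, (iii) ``denominator'' of a coordinate $x_n \in K$ becomes the ideal $\mathfrak{d}_n = \prod_{\ord_\pp x_n < 0} \pp^{-\ord_\pp x_n}$, and (iv) since $O_K$ typically has infinitely many units (and we are going to invert a density-one set of primes anyway), each point $[n]P$ will be represented not by an affine pair but by a quadruple $(U_n,X_n,V_n,Y_n) \in O_{K,\calW_K}^4$ with $(x_n,y_n) = (U_n/V_n, X_n/Y_n)$, exactly as in the $\Q$-version of the argument.

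First I would establish the four bullet properties that drove Poonen's construction for $E/K$: the primes appearing in $\mathfrak{d}_{x_n}$ and $\mathfrak{d}_{y_n}$ agree (from the Weierstrass equation), the sequence $\{\mathfrak{d}_n\}$ is a strong divisibility sequence in the ideal monoid, every $[n]P$ with $|\mathbf{N}|$ sufficiently large carries a \emph{primitive} prime divisor $\pp_n$ of appropriate residue characteristic (by the Silverman/Cheon--Hahn primitive divisor theorem over number fields, combined with the usual local-height estimate forcing the denominator exponents to grow slower than the denominator itself), and the set $\{\pp_\ell : \ell\text{ prime}\}$ of ``indicator primes'' has natural density zero in $K$. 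The last statement is the hardest input and is obtained from Serre's effective Chebotarev/open-image results applied to the Galois representation on $E$'s Tate modules, transplanted from $\Q$ to $K$; this is essentially the number field analogue of the density computation in Poonen's paper, and for our purposes only density zero (not any effective bound) is needed.

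With these four properties in hand, I would pick the set $\calV$ of indicator primes associated to prime-power indices $\ell^i$, define $\calW_K$ to be the complement of $\calV$ in the set of non-archimedean primes of $K$, and choose $m_0$ large enough to clear finitely many bad primes (primes of bad reduction, the finite torsion of $E(K)$, archimedean embedding issues, and any small indices where primitive divisors may fail). Inside $O_{K,\calW_K}$ one then has, for indices divisible by $m_0$, the clean identities
\[
(\mathfrak{d}_m,\mathfrak{d}_n)=(1),\quad \mathfrak{d}_m\mathfrak{d}_n \mid \mathfrak{d}_k \text{ and } \mathfrak{d}_k \mid \mathfrak{d}_m\mathfrak{d}_n \;\Longleftrightarrow\; (k) = (mn),
\]
so that divisibility of denominator ideals exactly tracks divisibility of indices up to units. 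Combining this with the Lagrange-type positivity trick and the GCD-trick of Proposition \ref{prop:dioph} (extended to $O_{K,\calW_K}$, which still has a Diophantine description of its nonzero elements and its ``positive'' elements) allows us to replace ideal-theoretic statements by polynomial conditions over $O_{K,\calW_K}$, giving a Diophantine definition of the set $\Pi$ that enforces $k_3 = k_1 k_2$.

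The main obstacle will be the density-zero assertion for the indicator primes, because Serre's results and the subsequent Chebotarev estimates used by Poonen were stated over $\Q$ and require careful bookkeeping when the base field is a general number field, in particular handling the interaction between primes of $K$ lying over a single rational prime and the various Frobenius conjugacy classes in $\mathrm{GL}_2(\Z_\ell)$ cut out by the Galois representation attached to $E/K$. Once the density input is secured, the class Diophantine model of $\Z$ and the undecidability of HTP over $O_{K,\calW_K}$ follow from the same equivalence-class construction as in Corollary \ref{cor:main}, with $\phi(n) = [(U_{m_0 n},X_{m_0 n},V_{m_0 n},Y_{m_0 n})]$ and $\phi(0) = \{(0,0,0,0)\}$, and undecidability passes through by the argument already given for rings possessing a class Diophantine model of $\Z$.
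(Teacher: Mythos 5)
Your proposal matches the paper's own cursory treatment of this theorem: the paper merely asserts that ``the strategy remains the same as over $\Q$'' and refers to \cite{Sh38}, and your elaboration --- denominator ideals in place of reduced denominators, primitive-divisor theorems over $K$, a density-zero indicator-prime set via Serre/Chebotarev, the quadruple representation to absorb units, the choice of $m_0$, and passage to a class Diophantine model giving undecidability of HTP --- faithfully unpacks exactly that strategy in the number-field setting. The only caveat worth adding is that the ``Lagrange-type positivity trick'' you invoke does not transfer verbatim to a general (non-totally-real) number field, so the removal of sign ambiguity on indices needs the heavier integrality-at-primes machinery actually used in \cite{Sh38} rather than a direct sum-of-squares argument.
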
%

Theorems \ref{thm:ext1} -- \ref{thm:ext4} have an elliptic curve assumption in their statements.  These assumptions come in two flavors: an existence of a rank one elliptic curve over a field in question and an assumption concerning an elliptic curve of positive rank, not changing its rank in a finite extension.  The rank one assumption is pretty straightforward but the ``positive stable rank'' assumption can be modified.  It is not hard to show that it is enough to have a ``positive stable rank'' phenomenon for every cyclic extension of prime degree to obtain a Diophantine definition of $\Z$ over the ring of integers of any number field.  The reduction takes place in several steps.
\be
\item Let $K$ be a number field and let $O_K$ be the ring of integers of $K$.  Further, let $M$ be the Galois closure of $K$ over $\Q$ and let $O_M$ be the ring of integers of $M$.  Under these assumptions if $\Z$ has a Diophantine definition over $O_M$, then $\Z$ has a Diophantine definition over $O_K$.  (Thus we can consider Galois extensions of $\Q$ only.)
\item Let $M/\Q$ be a Galois extension of number fields with $O_M$ the rings of integers of $M$ respectively.  Let $E_1,\ldots, E_n$ be all the cyclic subextensions of $M$ with $O_{E_1},\ldots,O_{E_n}$ the rings of integers of $E_1,\ldots,E_n$ respectively.  Observe that $\bigcap_{i=1}^nE_i=\Q$ and $\bigcap_{i=1}^nO_{E_i}=\Z$ and therefore if each $O_{E_i}$ has a Diophantine definition over $O_M$, then $\Z$ has a Diophantine definition over $O_M$.  (Thus, it is enough to show that in every cyclic extension the ring of integers below has a Diophantine definition over the ring of integers above.)
\item If $E \subseteq H \subseteq M$ is a finite extension of number fields, $O_H$ has a Diophantine definition over $O_M$, and $O_E$ has a Diophantine definition over $O_H$, then $O_E$ has a Diophantine definition over $O_M$.  (Thus, it is enough to consider cyclic extensions of prime degree only.)
\ee
In the case of big rings the same kind of reductions also work, but an extra effort is required to make sure the sets of primes allowed in the denominators have the right density. For a general discussion of reductions of this sort see \cite{Sh16} and Chapter 2 of \cite{Sh34}.  \\

Unfortunately as of now we do not have unconditional results asserting the existence of required elliptic curves, but with a ``little help'' from this time a famous conjecture we do have the following recent result by Mazur and Rubin.

\begin{theorem}
Suppose L/K is a cyclic extension of prime degree of number
fields.  If the Shafarevich-Tate Conjecture is true for $K$, then there is an elliptic curve E over K with
rank(E(L)) = rank(E(K)) = 1. (See \cite{MR}.)
\end{theorem}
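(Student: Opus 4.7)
The plan is to produce an elliptic curve $E/K$ whose rank is one and whose rank does not grow upon base change to $L$. Write $p = [L:K]$ and $G = \Gal(L/K) \cong \Z/p\Z$. Then $E(L)\otimes\Q$ is a $\Q[G]$-module, and its decomposition into $G$-isotypic components puts the trivial piece equal to $E(K)\otimes\Q$. So it suffices to exhibit $E/K$ of rank one over $K$ for which the non-trivial isotypic components in $E(L)\otimes\Q$ all vanish. The strategy is to work with Selmer groups, where the Shafarevich-Tate hypothesis is used to convert Selmer-rank information into Mordell-Weil-rank information.

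I would first dispose of the quadratic case $p = 2$. Here $L = K(\sqrt{d})$ and one has the classical decomposition
\[
E(L)\otimes\Q \;\cong\; (E(K)\otimes\Q)\oplus (E^{d}(K)\otimes\Q),
\]
where $E^{d}$ is the quadratic twist. The task reduces to finding $E$ over $K$ with $\rank E(K) = 1$ and $\rank E^{d}(K) = 0$. I would attack this by controlling $2$-Selmer groups: impose local conditions on $E$ at a carefully chosen finite set of primes of $K$, including those ramified in $L/K$, forcing $\dim_{\F_2}\Sel_2(E/K) = 1$ and $\dim_{\F_2}\Sel_2(E^{d}/K) = 0$. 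The exact sequence
\[
0 \to E(K)/2E(K) \to \Sel_2(E/K) \to \Sha(E/K)[2] \to 0
\]
together with the assumption that $\Sha(E/K)$ and $\Sha(E^{d}/K)$ are finite (with the parity constraint on $\dim_{\F_2}\Sha[2]$ that finiteness provides) then translates these Selmer bounds into the desired Mordell-Weil rank equalities.

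The odd-prime case is structurally parallel but more delicate: the non-trivial $G$-isotypic component in $E(L)\otimes\Q$ corresponds to a $(p-1)$-dimensional irreducible representation of $G$, so one must run a $p$-descent instead of a $2$-descent, decomposing $\Sel_p(E/L)$ into its $G$-eigencomponents and using twisted Selmer groups attached to the non-trivial characters of $L/K$. Again, finiteness of $\Sha$ provides the bridge back to Mordell-Weil ranks, so the central issue is producing $E/K$ whose Selmer profile over $L$ has one-dimensional trivial part and trivial non-trivial part.

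The main obstacle, common to both cases, is constructing an $E$ with the desired Selmer-rank profile. I would start with an arbitrary elliptic curve $E_0/K$ and move through a family of twists (quadratic twists in the $p=2$ case; cyclic twists by characters cutting out extensions related to $L/K$ in the odd case), seeking one whose local behavior at a prescribed finite set of primes produces exactly the targeted global Selmer ranks. This would rely on equidistribution of local Selmer conditions within the twist family — the kind of statement Mazur and Rubin have established elsewhere in their work on rank distributions. The technically hardest step will be arranging, simultaneously and at the \emph{same} $E$, the nontriviality of the trivial-isotypic Selmer component (to guarantee $\rank E(K) = 1$) and the triviality of every non-trivial isotypic Selmer component; achieving both conditions at once forces one to choose the ramified local data at primes of $L/K$ with considerable precision.
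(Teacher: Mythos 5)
The paper proves nothing here: this is a survey and the statement is cited verbatim from Mazur and Rubin's Inventiones paper, so the relevant comparison is against their proof. Your $p=2$ treatment does match what they do: decompose $E(L)\otimes\Q$ via the quadratic twist $E^d$, steer through the quadratic twist family imposing local conditions at a finite set of primes (including those ramified in $L/K$) to force $\dim_{\F_2}\Sel_2(E^d/K)=1$ and $\dim_{\F_2}\Sel_2(E^d/K)=0$ for the companion twist, and use finiteness of the Shafarevich--Tate group over $K$ together with the Cassels pairing to promote Selmer ranks to Mordell--Weil ranks.

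The odd-$p$ branch is where your plan genuinely departs from theirs, and where it has a real gap. Mazur and Rubin never run a $p$-descent; the entire argument stays with $2$-descent and quadratic twists for all $p$. The point they exploit is that $\Sel_2(E^d/L)$ is a module over $\F_2[G]$ with $G$ cyclic of odd prime order, and since $2\nmid p$ this group ring is semisimple, so the $G$-fixed and $G$-moving parts split cleanly; by choosing the quadratic twisting class $d\in K^\times$ with prescribed local behavior at primes ramified or inert in $L/K$, they can simultaneously force $\dim_{\F_2}\Sel_2(E^d/K)=1$ and $\dim_{\F_2}\Sel_2(E^d/L)=1$ (and $E^d(K)[2]=0$). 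Then $\rank E^d(L)\le 1$ follows unconditionally from the $L$-Selmer bound, while the Shafarevich--Tate hypothesis is needed only over $K$ to pin $\rank E^d(K)$ to exactly $1$. Your proposal instead calls for a $p$-Selmer eigendecomposition and a distribution theorem for $p$-Selmer ranks in a family of ``cyclic twists by characters cutting out $L/K$.'' Two concrete obstructions: the nontrivial characters of $G$ are not quadratic, so twisting $E$ by them does not yield elliptic curves over $K$ (one lands in higher-dimensional abelian varieties via Weil restriction, where the descent toolkit is far weaker); and the equidistribution-of-local-conditions results you invoke exist for $2$-Selmer ranks in quadratic twist families, not for $p$-Selmer ranks in these more exotic families. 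As written, the odd case of your argument requires technology that is not available. Redirect it: keep $2$-descent, and replace the $p$-descent step with a lemma controlling $\dim_{\F_2}\Sel_2(E^d/L)$ via the $\F_2[G]$-module structure and local conditions at the $K$-primes ramified or inert in $L$.
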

As discussed above, this theorem has quite a few consequences.
\begin{corollary}
 If the Shafarevich-Tate Conjecture is true for all number fields, then the following statements are true.
\begin{itemize}
\item $\Z$ has a Diophantine definition over the ring of integers of any number field $K$.
\item    For any number field $K$ and any $\varepsilon >0$,  there exists a set $\calS$ of non-archimedean primes of $K$ such that
the natural density of $\calS$ is greater $\displaystyle 1-\frac{1}{[K:\Q]}-\varepsilon$ and $\Z$ is Diophantine over $O_{K,\calS}$.
\item  For any number field $K$, there exists a set of primes $\calS$ of natural density 1 such that $\Z$ has a Diophantine model over $O_{K,\calS}$.
\end{itemize}
\end{corollary}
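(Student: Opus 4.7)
The plan is to combine the Mazur--Rubin theorem stated immediately above with the three-step reduction outlined at the end of the previous subsection. Under the Shafarevich--Tate conjecture, Mazur--Rubin supplies, for every cyclic extension $L/F$ of prime degree of number fields, an elliptic curve $E/F$ with $\rank E(F)=\rank E(L)=1$; this is precisely the ``positive stable rank'' input required by the relative version of the third bullet of Theorem~\ref{thm:ext1}, and it is the single analytic fact I would use throughout.

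For the third bullet of the corollary no reduction is needed. Applying Mazur--Rubin to any cyclic prime-degree extension $L/K$ produces an elliptic curve $E/K$ of rank one. Feeding $E$ into Theorem~\ref{thm:ext3} (or, equivalently, into the class-Diophantine model supplied by Theorem~\ref{thm:ext4}) yields a recursive set $\calS$ of $K$-primes of natural density one such that $\Z$ has a Diophantine model over $O_{K,\calS}$ and HTP is unsolvable there.

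For the first bullet, let $M$ be the Galois closure of $K$ over $\Q$ and let $E_1,\ldots,E_n$ be the cyclic subextensions of $\Q$ inside $M$. By reduction~(2) and the identity $\bigcap_i O_{E_i}=\Z$, it suffices to define each $O_{E_i}$ Diophantinely over $O_M$; by reduction~(3) this further reduces to defining $O_F$ Diophantinely over $O_{F'}$ for every step $F\subset F'$ in a tower of cyclic prime-degree subextensions connecting $E_i$ to $M$. Each such step is handled by the relative version of the third bullet of Theorem~\ref{thm:ext1}, fed with the elliptic curve $E/F$ provided by Mazur--Rubin. Reduction~(1) finally transfers the resulting Diophantine definition of $\Z$ over $O_M$ down to $O_K$.

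For the second bullet I would execute the same tower reduction, but substitute Theorem~\ref{thm:ext2} for Theorem~\ref{thm:ext1} at each step, so that each Diophantine definition of $O_F$ in $O_{F'}$ becomes a Diophantine definition in a big ring $O_{F',\calS_{F'}}$ with $\calS_{F'}$ of density greater than $1-\frac{1}{[F':F]}-\delta$ for any prescribed $\delta>0$. Assembling these sets up the tower and then passing from $M$-primes to $K$-primes should produce a set $\calS$ of primes of $K$ of density at least $1-\frac{1}{[K:\Q]}-\varepsilon$, after distributing $\varepsilon$ as a total loss budget across the finitely many steps. The main obstacle is precisely this density bookkeeping: one must verify that the successive density losses telescope rather than accumulate to $1/[K:\Q]$, and that the restriction from $M$-primes to $K$-primes preserves the correct density via a Chebotarev-type count of splitting behaviour. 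This is the ``extra effort'' alluded to by the author, and the formalism for pulling it off is contained in \cite{Sh16} and Chapter~2 of \cite{Sh34}.
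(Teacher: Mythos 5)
Your proposal matches the paper's intended argument, which is exactly what the author signals by writing ``As discussed above, this theorem has quite a few consequences'' after the Mazur--Rubin statement: the Shafarevich--Tate hypothesis feeds Mazur--Rubin, which in turn furnishes the rank-one / stable-rank elliptic curve inputs to Theorems~\ref{thm:ext1}--\ref{thm:ext4} via the three-step Galois-closure-and-cyclic-tower reduction. You also correctly identify that the density bookkeeping in the second bullet is the one nontrivial point, and defer it to \cite{Sh16} and Chapter~2 of \cite{Sh34}, just as the paper does. One small nit: for the third bullet, Theorem~\ref{thm:ext3} and Theorem~\ref{thm:ext4} are not quite interchangeable, since the former yields a Diophantine model of $\Z$ (as literally asserted in the corollary) while the latter yields only a class Diophantine model; citing \ref{thm:ext3} is the cleaner choice, though either suffices for undecidability.
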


\section{Final Remarks}
This article touched only on a small part of the subject which grew out of Hilbert's Tenth Problem.  In particular, we did not discuss a great number of results on the analogs of HTP over different kinds of functions fields and infinite algebraic extensions, and also rings where the problem becomes decidable.  We refer the interested reader to the following surveys and collections for more information: \cite{Dencollection}, \cite{Po7} and \cite{Sh34}.

\end{document}